\documentclass[11pt]{article}

\usepackage[margin=1in]{geometry}
\usepackage{mathrsfs}
\usepackage{amsmath,amssymb,amsfonts}
\usepackage{amsthm}
\usepackage{bm}
\usepackage{xcolor}
\usepackage{graphicx}
\usepackage{float}
\usepackage{booktabs}
\usepackage{tikz}
\usepackage{pgfplots}
\pgfplotsset{compat=newest}
\usetikzlibrary{decorations.markings}
\usepackage{pgfplotstable}
\usepackage[normalem]{ulem}
\usepackage[colorlinks=true,urlcolor=blue,citecolor=red,linkcolor=blue]{hyperref}

\graphicspath{{./figures/}}

\newtheorem{theorem}{Theorem}[section]
\newtheorem{lemma}[theorem]{Lemma}

\theoremstyle{definition}
\newtheorem{definition}[theorem]{Definition}

\theoremstyle{remark}
\newtheorem{remark}[theorem]{Remark}

\numberwithin{equation}{section}
\numberwithin{figure}{section}
\def\bld{\boldsymbol}

\newcommand{\Au}{{\bm{u}}}

\allowdisplaybreaks
\renewcommand\arraystretch{1}

\title{A quasi-incompressible Cahn--Hilliard--Darcy model for two immiscible fluids in porous media}

\author{
Daozhi Han\\
\small Department of Mathematics, State University of New York at Buffalo, Buffalo, NY 14260\\
\small \texttt{daozhiha@ub.edu}
\and
Sayantan Sarkar\\
\small Department of Mathematics, State University of New York at Buffalo, Buffalo, NY 14260\\
\small \texttt{sayantan@buffalo.edu}
}

\date{}

\begin{document}

\maketitle

\begin{abstract}
We derive a quasi-incompressible Cahn-Hilliard-Darcy  phase-field model (qCHD) with the logarithmic Flory-Huggins free energy density function for two-phase flows in porous media. The model satisfies an energy-dissipation law. In the formal sharp interface limit, the qCHD model gives rise to the classical Muskat's problem.  By exploiting estimates of the pressure from Darcy's equations, we establish global existence of weak solutions in both 2D and 3D to the qCHD model.
\end{abstract}

\noindent\textbf{Keywords:} Navier-Stokes, Cahn-Hilliard, Darcy, diffuse interface model, well-posedness, superposed free flow and porous media

\medskip
\noindent\textbf{Mathematics Subject Classification (2010):} 35K61, 76T99, 76S05, 76D07.

\medskip

	\section{Introduction}
	\label{intro-sec}
The study of two-phase flows in porous media is vital for both mathematical analysis and engineering applications \cite{Yan2022} including unsaturated soil flow, petroleum reservoir displacement, geological ($\mathrm{CO}_2$) storage \cite{KOLDITZ20121919}, proton exchange membrane fuel cells \cite{WANG2008603} water management in polymer-electrolyte fuel cells \cite{WeberNewman2004}. Historically, the Muskat problem \cite{Muskat1934} has been employed as a conventional model that depicts a fluid interface as an infinitely thin discontinuity. Although this model is effective for fixed geometries, it is vulnerable to finite-time singularities: smooth initial profiles can become unbounded \cite{Siegel2004}; the Rayleigh--Taylor stability criterion may initially be satisfied but can later fail \cite{Castro2012Rayleigh}; and even analytic data in a seemingly stable regime can result in instability and collapse \cite{Castro2013BreakdownFull}. To mitigate these issues, diffuse-interface (phase-field) formulations substitute the sharp discontinuity with a narrow yet finite transition layer, offering a more robust description that naturally regularizes the dynamics. For an in-depth review, we refer to Anderson et al. \cite{anderson1998diffuse}.

Building on the diffuse interface description, Cahn and Hilliard \cite{CahnHilliard1958} introduced an evolution equation for the interface between two phases. The Cahn--Hilliard equation is derived by minimizing the free-energy potential, with the minimum values corresponding to the different phases (i.e., different fluids). The CH equation has been thoroughly investigated theoretically, both without logarithmic potential (\cite{ElliottZheng1986,BloweyElliott1991,ElliottGarcke1996,Chen1996}) and with logarithmic potential (\cite{DebusscheDettori1995,MiranvilleZelik2004,AbelsWilke2007,CherfilsMiranvilleZelik2011,LondenPetzeltova2018}). In the context of two-phase flows, the Cahn--Hilliard equation was coupled with fluid equations such as the Navier--Stokes equations. This coupling, now referred to as the Cahn--Hilliard--Navier--Stokes (CHNS) system, initially emerged in kinetic theories of critical dynamics \cite{kawasaki1970kinetic,halperin1974renormalization,siggia1976critical} and was formalized as Model H by Hohenberg and Halperin \cite{Hohenberg1977}. For the matched-density scenario ($\rho_1=\rho_2$), the CHNS model was rigorously grounded in thermodynamics using the microforce formulation by Gurtin et al. \cite{Gurtin1996}. Subsequently, Jacqmin conducted the first fully resolved simulations and formally derived the sharp interface limit as the interfacial thickness approached zero \cite{Jacqmin1999}. With a regular (polynomial) free-energy density, Boyer demonstrated global weak existence and uniqueness \cite{Boyer2001}, while Gal and Grasselli later identified exponential attractors that govern long-term dynamics \cite{GalGrasselli2010}. When employing the physically realistic Flory--Huggins logarithmic potential, Abels established the theory with a comprehensive global weak/strong solvability result \cite{Abels2009Singular}. Two variants arise for fluids with different densities ($\rho_1\neq\rho_2$). In the strictly incompressible context ($\nabla\cdot u=0$), the frame-indifferent Abels--Garcke--Gr\"un formulation provides global weak solutions for polynomial free energy \cite{AbelsGarckeGrun2012} and was extended to include degenerate mobility and non-local interactions by Abels, Depner, and Garcke \cite{AbelsDepnerGarcke2013}. Allowing for slight compressibility results in a quasi-incompressible version ($\nabla\cdot u\propto\partial_t\rho$), as derived by Lowengrub and Truskinovsky, who also established its sharp interface limit \cite{LowengrubTruskinovsky1998}. Abels later demonstrated strong well-posedness in two dimensions \cite{Abels2012Quasi}, and Abels and Marquardt justified the Stokes-type sharp interface limit for unequal densities \cite{AbelsMarquardt2021}. Recent studies incorporate logarithmic or non-local energies, notably by Frigeri, Gal, and Grasselli \cite{FrigeriGalGrasselli2016} and Knopf and Signori \cite{KnopfSignori2021}.

Phase-field models for thin-gap and porous-medium flows were developed following the derivation by Folch \emph{et al.} of a Cahn--Hilliard matched-asymptotic expansion. This expansion simplifies, after gap-averaging and applying the creeping-flow limit $Re\rightarrow 0$, to the classical Hele--Shaw free-boundary problem \cite{Folch1999}. Subsequent calibration for pinch-off and reconnection by Lee, Lowengrub, and Goodman confirmed the accuracy of this reduction \cite{LeeLowengrubGoodman2002,LeeLowengrubGoodman2002II}. More broadly, starting from the matched-density CHNS equations, the Cahn--Hilliard--Hele--Shaw/Darcy (CHHS/CHD) system can be derived either through the same slit-averaging approach or by incorporating an isotropic permeability drag into the momentum balance. This procedure is detailed in the diffuse-interface review by Anderson, McFadden, and Wheeler \cite{anderson1998diffuse} and has been rigorously formalized for variable density by Ded\`e, Garcke, and Lam, who derived CHHS from the frame-indifferent AGG CHNS model \cite{Dede2018}. For matched densities with classical polynomial free energy, global weak solutions were initially proven by Feng and Wise \cite{FengWise2012}, extended to weak--strong uniqueness in Stokes--Darcy couplings by Han, Wang, and Wu \cite{HanWangWu2014}, and further analyzed for viscosity contrast in \cite{Dede2018}. Under the physically realistic Flory--Huggins logarithmic potential, global weak solutions and two-dimensional uniqueness were established by Giorgini, Grasselli, and Wu \cite{GiorginiGrasselliWu2018}. Additionally, Jia \emph{et al.} \cite{Jia2020} and Cavaterra, Frigeri, and Grasselli \cite{CavaterraFrigeriGrasselli2021} developed decoupled energy-stable schemes, Wang and Makhlouf introduced an unconditionally stable linear time-stepping method \cite{WangMakhlouf2022}, and Giorgini provided further well-posedness results \cite{Giorgini2019WellPosedness}. For unmatched densities, the incompressible CHHS theory is grounded in the variable-density model by Ded\`e, Garcke, and Lam \cite{Dede2018}, as well as in the tumor-growth Brinkman variants by Ebenbeck, Garcke, and N\"urnberg \cite{EbenbeckGarckeNurnberg2021} and Knopf and Signori \cite{KnopfSignori2021}. Quasi-incompressible or mass-source formulations yield global strong solutions (Giorgini, Lam, Rocca, and Schimperna \cite{GiorginiLamRoccaSchimperna2020}) and existence with strongly separating logarithmic energies (Schimperna \cite{Schimperna2022}), although a complete Darcy analogue of the quasi-incompressible CHHS/CHD model remains an unresolved issue.

In this contribution, we derive a quasi-incompressible Cahn--Hilliard--Darcy model (qCHD) for two-phase incompressible fluids in porous media. The qCHD model uses the volume fraction difference as the order parameter and employs the  mass-averaged mixture velocity that is non-solenoidal. Based on Onsager's extremum principle, we systematically derive the qCHD model, which obeys local mass conservation (the continuity equation) and satisfies an energy-dissipation law. By leveraging the pressure estimate from Darcy's law, we show that the qCHD system with the Flory-Huggins logarithmic potential admits a global  weak solution in both 2D and 3D. Such results are not known to be available for quasi-incompressible Cahn-Hilliard-Navier-Stokes equations. Finally, the formal matched asymptotic expansion of the qCHD model with the physically relevant logarithmic potential gives rise to a modified Muskat problem that is close to the original one.
To the best of our knowledge, this is the first study to present a quasi-incompressible Cahn--Hilliard--Darcy system that incorporates a singular logarithmic potential and non-uniform fluid properties. Section \ref{sec(2)} details the derivation process using Onsager's variational principle, Section \ref{sec(4)} introduces weak formulations and provides a comprehensive proof of the global existence of weak solutions, and Appendix \ref{App} illustrates the alignment of the sharp interface limit with a Muskat-like problem.

	\section{The quasi-incompressible Cahn-Hilliard-Darcy system}\label{sec(2)}
	\subsection{Derivation of the quasi-incompressible Cahn-Hilliard-Darcy model}
	Let $\rho_1, \rho_2$ be the specific (constant) densities of fluid 1 and fluid 2, respectively, occupying a porous media. The order parameter is the volume fraction difference denoted by $\phi$. The density and viscosity of the mixture are then
	\begin{align*}
		\rho=\frac{1-\phi}{2}\rho_1+\frac{1+\phi}{2}\rho_2, \quad \eta=\frac{1-\phi}{2}\eta_1+\frac{1+\phi}{2}\eta_2,
	\end{align*}
	with $\eta_1$ and $\eta_2$ being the viscosities of fluid 1 and 2. From the mass conservation of each fluid one derives the continuity equation
	\begin{align}
		&\partial_t (\chi\rho)+\nabla \cdot (\rho \bld u )=0, \label{CHD_con_mass}
	\end{align}
	where $\bld u=\frac{1-\phi}{2}\rho_1 \bld u_1+\frac{1+\phi}{2}\rho_2 \bld u_2$ is the mass-averaged velocity, and $\chi$ is the porosity. The order parameter $\phi$ satisfies the conservation equation
	\begin{align}
		&\partial_t (\chi\phi) +\nabla \cdot (\phi \bld u)=-\nabla \cdot \bld J, \label{CHD_con_vol}
	\end{align}
	with $\bld J$ the chemical diffusion flux to be determined. It follows from the definition of $\rho$ and Eqs. \eqref{CHD_con_mass} and \eqref{CHD_con_vol} that
	\begin{align}
		\nabla \cdot \bld u= -\alpha  \nabla \cdot \bld J, \label{div}
	\end{align}
	where $\alpha=\frac{\rho_1-\rho_2}{\rho_1+\rho_2}$ is the Atwood number \cite{SHARP19843}.
	
	Denote the total energy by 
	\begin{align}
		E(\phi)=\int_{\Omega} \chi \big(  F(\phi)+\frac{\epsilon^\ast}{2}|\nabla \phi|^2\big)+\chi \rho g z\, dx. \label{CHD_ener}
	\end{align}

Also, for later convenience, we define
\begin{align}
    \tilde{E}(\phi) &=  F(\phi)+\frac{\epsilon^\ast}{2}|\nabla \phi|^2.\label{Alt-En}
\end{align}
$F(\phi)$ being the Flory--Huggins homogeneous free energy function,
    \begin{align*}
        F(\phi)=\frac{\theta_0}{2} \Big[(1+\phi)\ln{(1+\phi)}+(1-\phi)\ln{(1-\phi)}\Big]-\frac{\theta_c}{2}\phi^2+C,
    \end{align*}
    where $\theta_0$ is the quenching temperature, $\theta_c$ is the critical temperature, $0<\theta_0< \theta_c$, and $C$ is a constant such that $F\geq0$. We assume $\theta_0 \ll \theta_c$, which implies that the minimizers of $F$ are close to $\pm1$. The limit $\frac{\theta_0}{\theta_c}\rightarrow 0$ is known as the deep quench limit, leading to the double obstacle potential, which is the interest of this discourse. We refer to \cite{Abels2009Singular, GiorginiGrasselliWu2018, Miranville2019} for further discussions on the singular potential.\\
In the case where $\theta_0$ is close to $\theta_c$, $F$ is often approximated by a polynomial of double-well type. 
	Utilizing the conservation equations \eqref{CHD_con_mass} and \eqref{CHD_con_vol}
	one calculates
	\begin{align}
		&\frac{d}{dt}E(\phi)=\int_\Omega \mu \partial_t (\chi \phi) +\partial_t (\chi\rho) g zdx =-\int_\Omega \mu \nabla \cdot (\phi \bld u+\bld J) +\nabla \cdot (\rho \bld u) g zdx \nonumber \\
		&
		=\int_\Omega \nabla \mu \cdot (\phi \bld u+\bld J) +g \rho \bld u \cdot \bld k dx, \label{CHD_rate}
	\end{align}
	where one has assumed the following boundary conditions
	\begin{align*}
		\nabla \phi \cdot \bld n=	\bld u \cdot \bld n=\bld J \cdot \bld n=0, \text{ on } \partial \Omega,
	\end{align*}
	and $\bld k$ is the unit vector pointing upward, $\mu= f(\phi)-\epsilon^\ast\Delta \phi$ is the chemical potential with $f=F^\prime$.
	
	One then postulates the following dissipation function (motivated by Darcy's equations and the Cahn--Hilliard equation)
	\begin{align}
		D=\int_\Omega \frac{M^{-1}}{2}|\bld J|^2+\frac{1}{2}\bld u ^T\mathbb{K}^{-1}\bld u dx, \label{CHD_diss}
	\end{align}
	with $M$ the mobility function, $\mathbb{K}$ the hydraulic conductivity matrix. $\mathbb{K} = \frac{\Pi}{\eta}$ and $\Pi$ is the permeability matrix. Also $\mathbb{K}$ is symmetric \cite{Neuman1977} positive definite \cite{Allaire1991, ArbogastLehr2006}.
	
	One now introduces the Rayleighian with a Lagrange multiplier $p$ to relax the constraint \eqref{div}
	\begin{align}
		&R=\frac{d}{dt} E+ D-\int_\Omega  p (\nabla \cdot \bld u+\alpha \nabla \cdot \bld J) dx \label{CHD_Ray}\\
		&=\int_\Omega \nabla \mu \cdot (\phi \bld u+\bld J) +g \rho \bld u \cdot \bld k dx+\int_\Omega \frac{M^{-1}}{2}|\bld J|^2+\frac{1}{2}\bld u^T\mathbb{K}^{-1}\bld u dx-\int_\Omega  p (\nabla \cdot \bld u+\alpha \nabla \cdot \bld J) dx. \nonumber 
	\end{align}
	Following Onsager's extremum principle (equivalent to the principle of Rayleigh's least energy dissipation), one minimizes $R$ with respect to the rate functions $\bld u, \bld J$ and the Lagrange multiplier $p$. One obtains
	\begin{align*}
		\bld J=-M\nabla ( \mu+\alpha p),  \quad \phi \nabla \mu +g\rho \bld k+\mathbb{K}^{-1} \bld u +\nabla p=0.
	\end{align*}
	
	In summary we have derived the following quasi-incompressible Cahn--Hilliard--Darcy equations (qCHD)
	\begin{subequations}\label{CHD}
		\begin{align}
			&  \mathbb{K}^{-1} \bld u =-\nabla p-\phi \nabla \mu-\rho g \bld k, \label{CHD-1} \\
			&\nabla \cdot \bld u=\alpha \nabla \cdot \big(M\nabla \mu_p\big), \label{CHD-2}\\
			&\partial_t (\chi \phi) +\nabla \cdot (\phi \bld u)=\nabla \cdot \big(M\nabla \mu_p\big), \label{CHD-3} \\
			&\mu_p=\mu+\alpha p, \quad \mu= f(\phi)-\epsilon^\ast\Delta \phi, \label{CHD-4}\\
			&\nabla \phi \cdot \bld n=	\bld u \cdot \bld n=M \nabla \mu_p \cdot \bld n=0, \text{ on } \partial \Omega.
		\end{align}
	\end{subequations}
	A consequence of the variational procedure is that the qCHD system satisfies an energy law
	\begin{align}
		\frac{d}{dt}E(\phi)=-2D=-\int_{\Omega} M |\nabla \mu_p|^2+\bld u^T\mathbb{K}^{-1}\bld u dx. \label{CHD_ener_law}
	\end{align}

    \begin{remark}
        An equivalent formulation of the qCHD system can be derived by incorporating the gravitational effect into the chemical potential:
       \begin{subequations}\label{eq-CHD}
		\begin{align}
			&  \mathbb{K}^{-1} \bld u =-\nabla p-\phi \nabla \mu, \label{eq-CHD-1} \\
			&\nabla \cdot \bld u=\alpha \nabla \cdot \big(M\nabla \mu_p\big), \label{eq-CHD-2}\\
			&\partial_t (\chi \phi) +\nabla \cdot (\phi \bld u)=\nabla \cdot \big(M\nabla \mu_p\big), \label{eq-CHD-3} \\
			&\mu_p=\mu+\alpha p, \quad \mu= f(\phi)-\epsilon^\ast\Delta \phi+\rho^{\prime} g z, \label{eq-CHD-4}\\
			&\nabla \phi \cdot \bld n=	\bld u \cdot \bld n=M \nabla \mu_p \cdot \bld n=0, \text{ on } \partial \Omega.
		\end{align}
	\end{subequations} 
    \end{remark}

    \section{Global existence of weak solutions}\label{sec(4)}    
	\subsection{The main result}
    The dimensionless form the qCHD system is given in the Appendix, Eqs. \eqref{rescale1}--\eqref{rescale4}. Throughout, $\Omega \subset \mathbb{R}^3$ is a bounded domain. 
	We slightly modify the quasi-incompressible Cahn-Hilliard-Darcy equations (qCHD) and present them as:
	\begin{align}
		&\Pi^{-1} \eta (\phi) \bld u =-\frac{2\rho_1}{\rho_1+\rho_2}\rho \nabla p-\frac{1}{\epsilon Ca^\ast}\phi \nabla \mu_p-\frac{Bo}{Ca^\ast}\rho \bld k, \label{qCHDs1}\\
		&\partial_t (\chi \rho) +\nabla \cdot (\rho \bld u)=0, \label{qCHDs2}\\
		&\partial_t (\chi \phi) +\nabla \cdot (\phi \bld u)=\frac{1}{Pe}\Delta \mu_p, \label{qCHDs3}\\
		&\mu_p=f(\phi)-\epsilon^2\Delta \phi+\alpha \epsilon Ca^{\ast} p, \quad \rho=\frac{1-\phi}{2}+ \frac{1+\phi}{2}\frac{\rho_2}{\rho_1},\label{qCHDs4}
	\end{align}
	equipped with the boundary conditions
	\begin{align*}
		\bld u\cdot \bld n=\nabla \phi \cdot \bld n=\nabla \mu_p \cdot \bld n=0, \quad \text{ on } \Gamma:= \partial \Omega.
	\end{align*}
    Here $f(\phi)=\ln{\frac{1+\phi}{1-\phi}}-\phi$.
	Since most of the parameters do not affect the analysis in an essential way, we set them as unity, i.e. $\Pi=I, \epsilon=Ca^\ast=Bo=\chi=Pe=1$.

	Let $Q_{s, t}$ denote $\Omega \times (s, t)$ and $Q=Q_{0, T}$ for fixed $T>0$. The $L^2$ inner product and norm are denoted  by $(\cdot, \cdot)$ and $||\cdot||$, respectively.  We introduce $M:=H^1(\Omega) \cap L_0^2(\Omega)$ with $L_0^2(\Omega)$ the subspace of $L^2(\Omega)$ of mean zero; $\mathbf{X}:=[L^2(\Omega)]^3$, $Y:=H^1(\Omega)$, $Y^\prime$ the dual of $Y$, and $\langle\cdot, \cdot\rangle$ the duality between $Y^\prime$ and $Y$.  Finally the energy of the system is defined as
	\begin{align}
		E(\phi):=\int_{\Omega} F(\phi)+\frac{1}{2}|\nabla \phi|^2+\rho z \,dx. \label{Ener}
	\end{align}

	The weak formulation of the qCHD system is defined as follows.
	\begin{definition}\label{def-w}
		Assume $\phi_0 \in L^\infty(\Omega) \cap Y$ with $-1<\phi_0<1$ a.e.. Then $\left(\rho, \phi, \bld{u}, \mu_p, p\right)$ is called  a weak solution of the system \ref{qCHDs1}-\ref{qCHDs4} on time interval $[0,T]$ if
		\begin{align*}
			& \phi \in L^\infty(0,T; Y) \cap H^1(0,T; Y^\prime) \cap L^4(0,T;H^2(\Omega)), \\
			&f(\phi) \in L^2(Q), \quad \phi \in L^\infty(Q),  \quad |\phi|<1 \quad a. e., \\
			&\Au \in L^2(0,T; \mathbf{X}), \quad p \in L^2(0,T; M),\quad \mu_p \in L^2(0,T; Y), 
		\end{align*}
		and, 
		\begin{enumerate}
			\item $ \forall \varphi \in Y$, $t\in(0,T)$ a.e.
			\begin{align}
				&\langle\partial_t \phi, \varphi\rangle-(\phi \Au, \nabla \varphi)=-(\nabla \mu_p, \nabla \varphi), \label{CH1}\\
				&\langle\partial_t \rho, \varphi\rangle-(\rho \Au, \nabla \varphi)=0, \label{Con}
			\end{align}
			where  for $(x, t) \in Q$ a.e.
			\begin{align}
				&f(\phi)-\Delta \phi+\alpha p=\mu_p, \label{CH2} \\
				&\eta (\phi) \bld u =-\frac{2\rho_1}{\rho_1+\rho_2}\rho \nabla p-\phi \nabla \mu_p-\rho \bld k,  \rho=\frac{1-\phi}{2}+ \frac{1+\phi}{2}\frac{\rho_2}{\rho_1}, \label{Dar}
			\end{align}
			
			\item $\phi|_{t=0}=\phi_0$, $\nabla \phi \cdot n=0$ on $\Gamma$.
			
		\end{enumerate}
		
	\end{definition}

	The main result of this section is the following theorem.
	
	\begin{theorem}
		\label{exis-theo}
		Suppose $\phi_0 \in Y$ such that $E(\phi_0)<\infty$ and $|\phi_0|<1$ a. e. Then for any $T>0$ the qCHD system \eqref{qCHDs1}--\eqref{qCHDs4} admits a weak solution in the sense of Definition \ref{def-w}. Moreover, one has $\phi \in C(0,T; Y)$, and the following energy identity holds
		\begin{align}
			\label{En-law}
			E(\phi(t))+\int_{Q_{0,t}} \eta(\phi)|\Au|^2+|\nabla \mu_p|^2\, dxdt=E(\phi_0).
		\end{align}
		
	\end{theorem}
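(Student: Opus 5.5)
We plan to prove Theorem \ref{exis-theo} by a Galerkin approximation combined with a regularization of the singular potential. We then pass to the limit using compactness, and use the structure of Darcy's law to control the pressure.

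\textbf{Step 1: approximate problem.} For $\delta\in(0,1)$ we replace $F$ by a standard $C^2$ regularization $F_\delta$ that agrees with $F$ on $[-1+\delta,1-\delta]$ and is extended quadratically outside. Then $f_\delta=F_\delta'$ is globally Lipschitz up to the $-\phi$ part, $F_\delta\ge -C$ uniformly, and $F_\delta\le F$. For this regularized system we use a Faedo--Galerkin scheme on the Neumann Laplacian eigenfunctions for $\phi$ and $\mu_p$. At each Galerkin step the pair $(\bld u,p)$ is recovered from \eqref{Dar} together with the constraint $\nabla\cdot\bld u=\alpha\Delta\mu_p$, which is equivalent to \eqref{Con}. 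Eliminating $\bld u=\eta^{-1}(\cdots)$ gives an elliptic Neumann problem for $p\in M$:
\begin{align*}
-\nabla\cdot\Big(\tfrac{c\rho}{\eta}\nabla p\Big)=\alpha\Delta\mu_p+\nabla\cdot\Big(\tfrac{\phi\nabla\mu_p+\rho\bld k}{\eta}\Big),
\end{align*}
with $c=2\rho_1/(\rho_1+\rho_2)$. This problem is uniformly elliptic as long as $\rho,\eta$ are bounded above and below. We enforce this by truncating $\phi$ inside $\rho(\phi),\eta(\phi)$ at the approximate level; the truncation becomes inactive once $|\phi|<1$ is known. Lax--Milgram and a fixed point (or ODE) argument then give local approximate solutions.

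\textbf{Step 2: uniform estimates.} We test \eqref{CH1} with $\mu_p$, test \eqref{Con} with $z$ and with the pressure-related term, and take the inner product of \eqref{Dar} with $\bld u$. The key cancellation uses $\rho=a+b\phi$, which is affine, so $(\rho\bld u,\nabla p)$ combines with $\alpha(\partial_t\phi,p)$ through \eqref{Con} to cancel. This yields the discrete analogue of \eqref{En-law} with $F_\delta$, and hence:
\begin{itemize}
\item $\phi\in L^\infty(Y)$, $\bld u\in L^2(\mathbf X)$, $\nabla\mu_p\in L^2(L^2)$, uniformly in the Galerkin index and in $\delta$;
\item a bound on $\partial_t\phi$ in $L^2(Y')$, from $\phi\bld u\in L^2(L^{3/2})$ and a slight upgrade of the test-space regularity to the Galerkin projection.
\end{itemize}
The main obstacle is estimating $p$ and the mean of $\mu_p$, since $\mu_p$ contains $\alpha p$ and $f(\phi)$. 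For $p$ we use the elliptic problem above: $\|\nabla p\|\le C(\|\nabla\mu_p\|+\|\bld u\|+1)$ follows directly from \eqref{Dar} with $\rho$ bounded below. Since $p$ has mean zero, this gives $p\in L^2(M)$. For the mean of $\mu_p$ we test \eqref{CH2} with $\phi-\bar\phi$ and use the Miranville--Zelik inequality $f_\delta(\phi)(\phi-\bar\phi)\ge c|f_\delta(\phi)|-C$, valid since $|\bar\phi_0|<1$ and the mean of $\phi$ is conserved. This bounds $\|f_\delta(\phi)\|_{L^1}$, hence $\overline{\mu_p}$, hence $\mu_p\in L^2(Y)$. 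Testing \eqref{CH2} with $f_\delta(\phi)$ and using monotonicity then gives $f_\delta(\phi)\in L^2(Q)$ and $\Delta\phi\in L^2(Q)$. The $L^4(H^2)$ bound follows by interpolation, using the $L^\infty(H^1)$ bound, the $L^2(H^2)$ bound and elliptic regularity for the Neumann Laplacian with the monotone $f$ (as in Abels, and Giorgini--Grasselli--Wu), with $\|\Delta\phi\|^2\lesssim\|\nabla\phi\|\,\|\nabla(\mu_p-\alpha p)\|$.

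\textbf{Step 3: passage to the limit.} Aubin--Lions gives $\phi\to\phi$ strongly in $C([0,T];L^2)$ and in $L^2(H^{2-})$, so $\phi\to\phi$ a.e. The bound on $f_\delta(\phi)$ in $L^2$ forces $|\phi|<1$ a.e. in the limit, which removes the truncations. The weak limits of $\bld u$, $\nabla p$ and $\nabla\mu_p$, combined with strong convergence of $\rho,\eta,\phi$, let us pass to the limit in every product in \eqref{CH1}--\eqref{Dar}. The limit $f(\phi)$ of $f_\delta(\phi)$ is identified by a.e. convergence and Vitali.

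\textbf{Step 4: energy identity.} For equality in \eqref{En-law} and continuity $\phi\in C(0,T;Y)$, we test the limit equations with $\mu_p$, $z$ and $p$ directly. This is legitimate because $\partial_t\phi\in L^2(Y')$ and $\mu_p\in L^2(Y)$, and because the chain rule for the convex functional $\int F(\phi)+\frac12|\nabla\phi|^2$ holds under $\phi\in L^2(H^2)$, $f(\phi)\in L^2$ (the Brezis lemma for subdifferentials). The identity gives continuity of $t\mapsto E(\phi(t))$, and together with weak continuity in $Y$ this yields strong continuity in $Y$.
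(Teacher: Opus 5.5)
Your Steps 2--4 follow the paper closely: the pressure bound read off from \eqref{Dar} with $\rho$ bounded below, $\|\Delta\phi\|^2\lesssim\|\nabla\phi\|\,\|\nabla(\mu_p-\alpha p)\|$ for $L^4(0,T;H^2(\Omega))$, the measure argument for $|\phi|<1$, and the energy identity via testing with $\mu_p$, $p$, $z$ plus the chain rule. The gap is in Step 1: the approximate problem you describe does not satisfy the energy inequality on which every uniform bound in Step 2 rests. There are two separate failures.

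(i) In your Faedo--Galerkin scheme, let $V_n$ be spanned by the first $n$ Neumann eigenfunctions and $P_n$ be the $L^2$ projection onto it. Then \eqref{CH1} holds only for $\varphi\in V_n$. But $p_n$ comes from the continuous elliptic problem, and $z$ is in general not in $V_n$. So the tests with $p$ and $z$ that produce the cancellation are not admissible. For the same reason, the constraint $\nabla\cdot\bld u=\alpha\Delta\mu_p$ is \emph{not} equivalent to \eqref{Con} at this level. For $\rho_n=a+b\phi_n$ with $b=\frac{\rho_2-\rho_1}{2\rho_1}$ and any $\varphi\in Y$, one finds $\langle\partial_t\rho_n,\varphi\rangle-(\rho_n\bld u_n,\nabla\varphi)=-b\,\big(\phi_n\bld u_n,\nabla(\varphi-P_n\varphi)\big)$. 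For $\varphi=p_n$ this has neither a sign nor a uniform bound.

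(ii) The cancellation rests on the affine identity $\frac{2\rho_1}{\rho_1+\rho_2}\rho(\phi)+\alpha\phi\equiv 1$. Truncating $\phi\mapsto T(\phi)$ only inside $\rho$ and $\eta$ breaks this identity. Testing \eqref{Dar} with $\bld u$ then leaves the extra term $\alpha\big((\phi-T(\phi))\bld u,\nabla p\big)$, which cannot be controlled without an $L^\infty$ bound on $\phi$. Arguing that the truncation becomes inactive once $|\phi|<1$ is known is circular. That bound is obtained only in the limit, from estimates this very energy inequality must supply.

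Both defects can be repaired:
\begin{itemize}
\item discretize $p$ in $V_n$ as well, imposing the constraint only against $V_n$;
\item add $z$ to the basis;
\item truncate consistently in $\rho$, $\eta$, the flux $\phi\bld u$ and the Korteweg term $\phi\nabla\mu_p$, so that $\frac{2\rho_1}{\rho_1+\rho_2}\rho(T(\phi))+\alpha T(\phi)=1$ and the cancellation survives with $T(\phi)$ in place of $\phi$.
\end{itemize}

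The paper avoids the issue altogether with a time-discrete scheme. It keeps the logarithmic potential through a convex--concave splitting, so $|\phi^{k+1}|<1$ at every step (Lemma \ref{LeChe}). Hence $\rho(\phi^k)$ and $\eta(\phi^k)$ are bounded above and below with no truncation or regularization. It lags $\phi^k$ in $\rho$, $\eta$, $\phi\bld u$ and $\phi\nabla\mu_p$, and poses every discrete equation for all $\varphi\in Y$. So $p$ and $z$ are legitimate test functions and the discrete energy law \eqref{en-dis} holds exactly. The implicit coupling $\mu_p=\mu+\alpha p$, which your Step 1 passes over with ``Lax--Milgram and a fixed point'', is resolved there by Browder--Minty. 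In the monotonicity computation the cross terms $\pm\alpha\,\delta t\,(\nabla(\mu_p-\nu_p),\nabla(p_\mu-p_\nu))$ cancel.
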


	In the rest of this section we establish existence of weak solutions by compactness argument. We first construct a sequence of approximate solutions from a time-marching scheme; then we establish a prior estimates of the approximate solution; finally we pass to the limit and show the limit is a weak solution.
	
	\subsection{Approximate solutions}
	To construct a sequence of approximate solutions we design a time-marching scheme and establish existence of solutions to the time-discrete system. For an integer $N>0$, denote $\delta t=\frac{T}{N}$, $t^k=k\delta t$,  and $\phi^k:= \phi(t^k, x)$.  For $k=0, 1, \ldots$, given $\phi^k \in Y\cap L^\infty(\Omega)$ with $-1<\phi^k<1$ a. e., we seek 
	\begin{align*}
		(\phi^{k+1}, \mu_p^{k+1}, p^{k+1}, \Au^{k+1}) \in Y\cap L^\infty(\Omega)\times Y \times M \times \mathbf{X} \text{ with } -1< \phi <1 \text{ a. e.}
	\end{align*}
	such that
	$ \forall \varphi \in Y$, 
	\begin{align}
		&(\phi-\phi^k, \varphi)-\delta t(\phi^k \Au, \nabla \varphi)+\delta t(\nabla \mu_p, \nabla \varphi)=0, \label{CHd1}\\
		&(\rho-\rho^k, \varphi)-\delta t(\rho^k \Au, \nabla \varphi)=0, \label{Cond}\\
		&\big(f_v(\phi)+f_n(\phi^k), \varphi\big)+(\nabla \phi, \nabla \varphi)+\alpha (p, \varphi)=(\mu_p, \varphi), \label{CHd2} \\
		&\eta (\phi^k) \bld u =-\frac{2\rho_1}{\rho_1+\rho_2}\rho^k \nabla p-\phi^k \nabla \mu_p-\rho^k \bld k,  \rho=\frac{1-\phi}{2}+ \frac{1+\phi}{2}\frac{\rho_2}{\rho_1} \quad a.e. \label{Dard}, 
	\end{align}
	where $f_v=F_v^\prime, f_n=F_n^\prime$ correspond to convex-concave splitting of $F$, i.e. $F=F_v+F_n$.

	We first establish existence of solutions to the system \eqref{CHd1}--\eqref{Dard}. We reformulate the system into the following equivalent system
	\begin{align}
		&(\phi-\phi^k, \varphi)-\delta t(\phi^k \Au, \nabla \varphi)+\delta t(\nabla \mu_p, \nabla \varphi)=0, \label{CHde1}\\
		&\big(f_v(\phi)+f_n(\phi^k), \varphi\big)+(\nabla \phi, \nabla \varphi)=(\mu, \varphi), \label{CHde2} \\
		&\eta (\phi^k) \bld u = -\nabla p-\phi^k \nabla \mu-\rho^k \bld k, \label{Darde}\\
		&(\Au, \nabla \varphi)=\alpha (\nabla \mu_p, \nabla \varphi), \label{Conde}\\
		&\mu_p=\mu+\alpha p,  \rho=\frac{1-\phi}{2}+ \frac{1+\phi}{2}\frac{\rho_2}{\rho_1} \quad a.e. \label{PW}, 
	\end{align}
	
	We follow the monotone argument from \cite{HaWa2015} to show existence of solutions to Eqs. \eqref{CHde1}--\eqref{PW}.  The following results are standard.
	\begin{lemma} \label{LeChe}
		Given $\phi^k \in Y$ and $|\phi^k| <1$ a.e., for any $\mu \in Y$, there exists a unique solution $\phi$ to Eq. \eqref{CHde2} such that
		\begin{align*}
			\int_\Omega \phi dx=\int_\Omega \phi^k dx, \quad	\phi \in H^2(\Omega), \quad |\phi| <1 \quad a. e.
		\end{align*}
		Moreover, the solution operator $\phi(\mu): Y \rightarrow H^2(\Omega)$ is bounded,  continuous in the strong topology of $Y$ and in the weak topology of $H^2(\Omega)$.
	\end{lemma}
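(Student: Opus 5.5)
The plan is to treat \eqref{CHde2} as the Euler--Lagrange equation of a strictly convex functional under a mass constraint. With the convex--concave splitting we take $F_v(s)=(1+s)\ln(1+s)+(1-s)\ln(1-s)$, which is convex on $[-1,1]$, and $F_n(s)=-\tfrac{1}{2}s^2+C$, so that $f_n(\phi^k)=-\phi^k\in L^\infty(\Omega)$. Set $g:=\mu-f_n(\phi^k)\in Y$ and $m:=\frac{1}{|\Omega|}\int_\Omega\phi^k\,dx$. Since $|\phi^k|<1$ a.e., we have $|m|<1$.

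One caveat comes first. For fixed $\mu$, \eqref{CHde2} alone does not force $\int_\Omega\phi\,dx=\int_\Omega\phi^k\,dx$. The statement should therefore be read as: $\phi$ solves \eqref{CHde2} with $\mu$ replaced by $\mu+\lambda$, where $\lambda\in\mathbb{R}$ is a Lagrange multiplier uniquely determined by the mass constraint. This is how the constant is absorbed in the monotone framework of \cite{HaWa2015}; otherwise the mass constraint has to be dropped.

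\textbf{Step 1 (existence and uniqueness).} Minimize
\[
J(\psi)=\int_\Omega F_v(\psi)+\tfrac12|\nabla\psi|^2-g\psi\,dx
\]
over $K=\{\psi\in Y:\ |\psi|\le1 \text{ a.e.},\ \int_\Omega\psi\,dx=|\Omega| m\}$. The set $K$ is nonempty because the constant $m$ lies in it, and it is convex and weakly closed in $Y$. The functional $J$ is coercive on $K$ by Poincaré's inequality with fixed mean, and it is weakly lower semicontinuous. The direct method therefore gives a minimizer. Strict convexity of the Dirichlet term on functions with fixed mean gives uniqueness.

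\textbf{Step 2 (regularity and $|\phi|<1$).} Replace $f_v$ by its Yosida approximation $f_v^\lambda$, which is Lipschitz and monotone, and solve the regularized Neumann problem
\[
f_v^\lambda(\phi_\lambda)-\Delta\phi_\lambda=g+\lambda_\lambda,
\]
with the mean of $\phi_\lambda$ equal to $m$. Testing with $-\Delta\phi_\lambda$ and using $\int_\Omega (f_v^\lambda)'(\phi_\lambda)|\nabla\phi_\lambda|^2\,dx\ge0$ yields
\[
\|\Delta\phi_\lambda\|+\|f_v^\lambda(\phi_\lambda)\|\le \|g\|+|\Omega|^{1/2}|\lambda_\lambda|.
\]
The main obstacle is bounding the multiplier $\lambda_\lambda$ uniformly in $\lambda$. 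For this I would test with $\phi_\lambda-m$ and use the standard inequality
\[
\int_\Omega f_v^\lambda(\phi_\lambda)(\phi_\lambda-m)\,dx\ \ge\ c_m\int_\Omega|f_v^\lambda(\phi_\lambda)|\,dx-C_m,
\]
which is valid because $|m|<1$ (Miranville--Zelik). This gives an $L^1$ bound on $f_v^\lambda(\phi_\lambda)$. Integrating the equation then gives $|\lambda_\lambda|\le C(1+\|g\|)$. Passing to the limit $\lambda\to0$, using maximal monotonicity of $f_v$, produces $\phi\in H^2(\Omega)$ with $f_v(\phi)\in L^2(\Omega)$ and satisfying Neumann data. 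Because $|f_v(s)|\to\infty$ as $s\to\pm1$, the bound $f_v(\phi)\in L^2$ forces $|\phi|<1$ a.e. By uniqueness, this limit coincides with the minimizer from Step 1. The resulting bound is $\|\phi\|_{H^2}\le C(1+\|\mu\|_Y)$, with $C$ depending on $\phi^k$; this is the boundedness of the solution operator.

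\textbf{Step 3 (continuity).} Let $\phi_1,\phi_2$ correspond to $\mu_1,\mu_2$. Subtract the two equations and test with $\phi_1-\phi_2$, which has zero mean, so the multipliers drop out. Monotonicity of $f_v$ then gives
\[
\|\nabla(\phi_1-\phi_2)\|^2\le\|\mu_1-\mu_2\|\,\|\phi_1-\phi_2\|,
\]
and together with Poincaré's inequality this yields $\|\phi_1-\phi_2\|_Y\le C\|\mu_1-\mu_2\|$. Hence the solution operator is (Lipschitz) continuous from $Y$ into $Y$. If $\mu_n\to\mu$ in $Y$, then $\phi(\mu_n)$ is bounded in $H^2(\Omega)$ by Step 2. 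Every weakly convergent subsequence has limit $\phi(\mu)$, since the limit is identified through the $Y$-convergence just proved, so the whole sequence converges weakly in $H^2(\Omega)$.
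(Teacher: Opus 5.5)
Your proof is correct. The caveat you raise is a real defect of the statement, not a technicality. For $\mu=c+f_n(\phi^k)$ with $c\in\mathbb{R}$ (which lies in $Y$ for your splitting $f_n(s)=-s$), the unique solution of \eqref{CHde2} is the constant $f_v^{-1}(c)=\tanh(c/2)$. Its mean matches that of $\phi^k$ for only one value of $c$.

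The paper gives no proof; it calls the lemma standard. The result it leans on, \cite[Theorem 4.3]{AbWi2007}, quoted later as \eqref{Sin_l2}, is posed on the affine space of fixed mean, where the subgradient of the energy is $-\Delta\phi+P_0 f_v(\phi)$. That is precisely your multiplier formulation. It is also the only reading in which a bound that sees only $P_0\mu$ can hold uniformly. Your Steps 1--3 (direct method, Yosida regularization with uniform Miranville--Zelik control of the multiplier, monotonicity for Lipschitz dependence in $Y$, weak $H^2$ continuity by uniqueness of limits) unpack that theorem. So this is the standard route made explicit.

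There are two harmless slips. First, testing with $-\Delta\phi_\lambda$ already removes $\lambda_\lambda$, since $\int_\Omega\Delta\phi_\lambda\,dx=0$. Hence $\|\Delta\phi_\lambda\|\le\|g\|$, and the linear $H^2$ bound needs no multiplier estimate. Second, the multiplier bound your argument actually delivers is $C(1+\|g\|^2)$, not $C(1+\|g\|)$. This is because $\phi_\lambda$ is not confined to $[-1,1]$, so $(g,\phi_\lambda-m)$ must be absorbed through $\|\nabla\phi_\lambda\|$. This is irrelevant for boundedness.

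Your remark that this is how the constant is absorbed in the monotone framework is too quick, given how the lemma is used in Lemma~\ref{ex-tds}. With the multiplier, $\phi_\mu$, $(\Au_\mu,p_\mu)$ and hence $T(\mu)$ are invariant under $\mu\mapsto\mu+c$, and $\langle T(\mu),1\rangle=0$. Three consequences follow:
\begin{itemize}
\item the equality case ``iff $\mu=\nu$'' in \eqref{mono1} fails;
\item coercivity on $Y$ fails;
\item testing \eqref{CHde2} with $1$ controls $\int_\Omega(\mu+\lambda_\mu)\,dx$ rather than $\int_\Omega\mu\,dx$.
\end{itemize}
Browder--Minty must therefore be run on $Y\cap L^2_0(\Omega)$, with $\lambda_\mu$ added back at the end. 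This is harmless, since only $\nabla\mu$ and $\nabla\mu_p$ enter \eqref{CHde1}, \eqref{Darde}, \eqref{Conde}.

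Your other option, dropping the mass constraint, makes the lemma easier still. Testing the Yosida problem with $f_v^\lambda(\phi_\lambda)$ gives $\|f_v^\lambda(\phi_\lambda)\|\le\|g\|$ directly, and $f_v'\ge 2$ gives continuity. It also keeps $T$ strictly monotone on $Y$, with mass conservation recovered from $\langle T(\mu),1\rangle=0$. The cost is that \eqref{Sin_l2} is then not available with a uniform constant, because the mean of $\phi_\mu$ varies with $\mu$.
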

	\begin{lemma}\label{sol2}
		Given $\phi^k \in Y$ and $|\phi^k| <1$ a.e., for any $\mu \in Y$, there exists a unique pair $(\Au, p) \in \mathbf{X} \times M$ to Eqs. \eqref{Darde} and \eqref{Conde}.  Furthermore, the linear solution operator  is bounded in the strong topology. 
	\end{lemma}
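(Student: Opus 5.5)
The plan is to eliminate $\Au$ with the algebraic Darcy relation \eqref{Darde} and reduce the pair \eqref{Darde}--\eqref{Conde} to one uniformly elliptic Neumann problem for $p$. That problem is then solved by Lax--Milgram on $M$.

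\textbf{Step 1: bounds on the coefficients.} Since $|\phi^k|<1$ a.e., $\eta(\phi^k)=\frac{1-\phi^k}{2}\eta_1+\frac{1+\phi^k}{2}\eta_2$ is a convex combination of $\eta_1,\eta_2>0$. Hence
\[
\min(\eta_1,\eta_2)\le \eta(\phi^k)\le \max(\eta_1,\eta_2) \quad \text{a.e.},
\]
so $\eta(\phi^k)^{-1}\in L^\infty(\Omega)$ is bounded above and below by positive constants. Likewise $\phi^k\in L^\infty$ and $\rho^k\in L^\infty$. Therefore, for any $p\in M$ and $\mu\in Y$, \eqref{Darde} defines
\[
\Au=\eta(\phi^k)^{-1}\big(-\nabla p-\phi^k\nabla\mu-\rho^k\bld k\big)\in \mathbf{X}.
\]

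\textbf{Step 2: reduction to a problem for $p$.} Use $\mu_p=\mu+\alpha p$ from \eqref{PW} and insert the expression for $\Au$ into \eqref{Conde}. This gives the following problem: find $p\in M$ such that, for all $\psi\in Y$,
\begin{align*}
a(p,\psi):=\big((\eta(\phi^k)^{-1}+\alpha^2)\nabla p,\nabla\psi\big)=-\big(\eta(\phi^k)^{-1}\phi^k\nabla\mu+\eta(\phi^k)^{-1}\rho^k\bld k+\alpha\nabla\mu,\nabla\psi\big)=:\ell(\psi).
\end{align*}
Both sides vanish when $\psi$ is constant, so it is enough to test with $\psi\in M$. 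The bilinear form $a$ is bounded on $M$. It is also coercive there: $a(p,p)\ge c\|\nabla p\|^2\ge c'\|p\|_{H^1}^2$ by the lower bound on $\eta^{-1}$ and the Poincaré--Wirtinger inequality for mean-zero functions. By Step 1, $\ell$ is a bounded functional with
\[
|\ell(\psi)|\le C\big(\|\nabla\mu\|+1\big)\|\nabla\psi\|.
\]
Lax--Milgram then yields a unique $p\in M$ with $\|p\|_{H^1}\le C(\|\nabla \mu\|+1)$. Defining $\Au$ as in Step 1 gives a solution $(\Au,p)\in\mathbf{X}\times M$ of \eqref{Darde}--\eqref{Conde}, and
\[
\|\Au\|\le C(\|\nabla\mu\|+1).
\]

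\textbf{Step 3: uniqueness and the bound on the operator.} Take two solutions for the same $\mu$. Their difference $(\bld w,q)$ satisfies $\eta(\phi^k)\bld w=-\nabla q$ and $(\bld w,\nabla\psi)=\alpha^2(\nabla q,\nabla\psi)$. Testing with $\psi=q$ gives
\[
-(\eta^{-1}\nabla q,\nabla q)=\alpha^2\|\nabla q\|^2 .
\]
Hence $\nabla q=0$. Since $q$ has mean zero, $q=0$, and then $\bld w=0$. The map $\mu\mapsto(\Au,p)$ is affine: the part depending on $\mu$ is linear, and there is a constant part coming from the gravity term $\rho^k\bld k$. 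The estimates of Step 2 show it is bounded, and in fact Lipschitz, from $Y$ with its strong topology into $\mathbf{X}\times M$. This is the claimed strong continuity.

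There is no serious obstacle. The only delicate points are the uniform positivity of $\eta(\phi^k)^{-1}+\alpha^2$, which relies on $|\phi^k|<1$, and the use of mean-zero pressures, which fixes the constant left free by the Neumann problem.
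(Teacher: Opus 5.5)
Your argument is correct. The paper gives no proof of Lemma~\ref{sol2} and only calls it standard; eliminating $\Au$ through \eqref{Darde} and solving the resulting uniformly elliptic Neumann problem for $p\in M$, with coefficient $\eta(\phi^k)^{-1}+\alpha^2$, by Lax--Milgram is the natural way to supply that standard argument. Your observation that the solution map is affine rather than linear, because of the gravity term $\rho^k\bld k$, yet Lipschitz from $Y$ into $\mathbf{X}\times M$, is a correct refinement of the statement and is exactly what the continuity of $T$ in Lemma~\ref{ex-tds} needs.
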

	
	The following lemma establishes existence of solutions to the time-discrete system \eqref{CHde1}--\eqref{PW}
	
	\begin{lemma}\label{ex-tds}
		Given $\phi^k \in Y$ and $|\phi^k| <1$ a.e., there existence a weak solution to the sytem \eqref{CHde1}--\eqref{PW} such that
		\begin{align*}
			\phi \in H^2(\Omega), \Au \in \mathbf{X}, p \in M, \mu \in Y, |\phi|<1 \quad a.e.
		\end{align*}
		Moreover, the solution to the time-discrete system satisfies an energy law
		\begin{align}\label{en-dis}
			&E(\phi^{n+1}) +\sum_{k=0}^{n} \frac{1}{2}||\nabla (\phi^{k+1}-\phi^k)||^2+\delta t\sum_{k=0}^{n} 
			\int_{\Omega} \eta(\phi^k)|\Au^{k+1}|^2\\
			&+\delta t\sum_{k=0}^{n} |\nabla \mu_p^{k+1}|^2 \, dx \leq E(\phi_0), \quad n=1, 2\ldots N-1. \nonumber
		\end{align}
	\end{lemma}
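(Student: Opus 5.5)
Existence is proved by a fixed-point argument in the chemical potential $\mu$, following the monotone-operator approach of \cite{HaWa2015}. The energy law \eqref{en-dis} then comes from testing the discrete equations with the natural multipliers and summing over $k$.

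\emph{Step 1 (reduction to one equation for $\mu$).} Given $\mu\in Y$, Lemma \ref{LeChe} yields $\phi=\phi(\mu)\in H^2(\Omega)$ with $|\phi|<1$ and the mass of $\phi^k$. Lemma \ref{sol2} yields $(\Au,p)=(\Au(\mu),p(\mu))\in\mathbf X\times M$, linear and bounded in $\mu$. We then set $\mu_p=\mu+\alpha p(\mu)$. The system \eqref{CHde1}--\eqref{PW} is equivalent to finding $\mu\in Y$ with
\begin{align*}
\mathcal T(\mu)(\varphi):=(\phi(\mu)-\phi^k,\varphi)-\delta t(\phi^k\Au(\mu),\nabla\varphi)+\delta t(\nabla\mu_p(\mu),\nabla\varphi)=0 \quad \forall\varphi\in Y.
\end{align*}
By the continuity statements in the two lemmas, $\mathcal T:Y\to Y'$ is bounded and continuous, hence pseudomonotone. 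We show it is coercive and conclude by the Browder--Minty theorem. Alternatively, we can use a Leray--Schauder argument, with the compactness of $\phi(\mu)$ in $L^2$ coming from the $H^2$ bound.

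\emph{Step 2 (monotonicity/coercivity).} We test with $\varphi=\mu_1-\mu_2$ in the difference $\mathcal T(\mu_1)-\mathcal T(\mu_2)$. The term $(\phi_1-\phi_2,\mu_1-\mu_2)$ is nonnegative: by \eqref{CHde2} it equals $(f_v(\phi_1)-f_v(\phi_2),\phi_1-\phi_2)+\|\nabla(\phi_1-\phi_2)\|^2\ge0$, using convexity of $F_v$.

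For the flow terms we use \eqref{Darde} tested with $\Au$ and \eqref{Conde} tested with $p$. Since $\Au\cdot\bld n=0$, this gives $(\Au,\nabla p)=\alpha(\nabla\mu_p,\nabla p)$. Combining these identities (linear parts only, with $\rho^k\bld k$ cancelling in differences), we obtain
\begin{align*}
-(\phi^k\Au,\nabla\mu)+(\nabla\mu_p,\nabla\mu)=(\eta(\phi^k)\Au,\Au)+\|\nabla\mu_p\|^2.
\end{align*}
This is the discrete analogue of the computation leading to \eqref{CHD_ener_law}. Hence $\mathcal T$ is monotone. The same identity gives coercivity modulo constants; the mean of $\mu$ is controlled by the mean-preserving property and the bound $|\phi|<1$. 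The mean value of $\mu$ is the delicate point.

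\emph{Step 3 (energy law).} This is the step I expect to need the most care, because both the gravitational term and the mean of $\mu$ must be handled in the nonlinear setting. Take $\varphi=\mu_p^{k+1}$ in \eqref{CHd1} and $\varphi=z$ in \eqref{Cond}. Then test \eqref{CHd2} with $\phi^{k+1}-\phi^k$, using:
\begin{itemize}
\item convexity of $F_v$ and concavity of $F_n$, which give $(f_v(\phi^{k+1})+f_n(\phi^k),\phi^{k+1}-\phi^k)\ge \int F(\phi^{k+1})-F(\phi^k)$;
\item the identity $(\nabla\phi^{k+1},\nabla(\phi^{k+1}-\phi^k))=\tfrac12\|\nabla\phi^{k+1}\|^2-\tfrac12\|\nabla\phi^k\|^2+\tfrac12\|\nabla(\phi^{k+1}-\phi^k)\|^2$.
\end{itemize}
The term $\alpha(p,\phi^{k+1}-\phi^k)$ is converted, via \eqref{CHd1} and \eqref{Conde}, into the Darcy dissipation as in Step 2. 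The gravitational contribution $\delta t(\rho^k\Au,\bld k)$ then cancels with the same term appearing in \eqref{Darde}. Summing over $k$ yields \eqref{en-dis}.
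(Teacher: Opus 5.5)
You follow the paper's route: reduce to $\mathcal T(\mu)=0$ on $Y$ via Lemmas \ref{LeChe} and \ref{sol2}, obtain monotonicity from the convexity of $F_v$ and the Darcy identity (testing \eqref{Darde} with $\Au$ and \eqref{Conde} with $p$), apply Browder--Minty, and derive \eqref{en-dis} by testing. Your monotonicity computation and your Step 3 are correct. The gap is coercivity, which you dispose of in one sentence but which is where the real work lies.

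First, your identity shows that the flow part $-\delta t(\phi^k\Au,\nabla\mu)+\delta t(\nabla\mu_p,\nabla\mu)$ of $\langle\mathcal T(\mu),\mu\rangle$ equals $\delta t\big((\eta(\phi^k)\Au,\Au)+\|\nabla\mu_p\|^2+(\rho^k\bld k,\Au)\big)$. This contains no $\|\nabla\mu\|^2$, so it does not give coercivity in $\mu$, not even modulo constants. Since $\mu=\mu_p-\alpha p$, you need the pressure estimate from Darcy's law, which is exactly the device the paper emphasizes. Because $\frac{2\rho_1}{\rho_1+\rho_2}\rho^k=1-\alpha\phi^k\ge 1-|\alpha|>0$, the equivalent form \eqref{Dard} gives $\|\nabla p\|\le C\big(\|\sqrt{\eta(\phi^k)}\,\Au\|+\|\nabla\mu_p\|+1\big)$. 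The same bound then holds for $\|\nabla\mu\|\le\|\nabla\mu_p\|+|\alpha|\,\|\nabla p\|$. After absorbing the gravity term, the flow part is $\ge c\,\|\nabla\mu\|^2-C$, while the convex--concave splitting gives $(\phi-\phi^k,\mu)\ge\frac12\|\nabla\phi\|^2+\int_\Omega F(\phi)\,dx-C$.

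Second, the pointwise bound $|\phi|<1$ does not control the mean $\bar\mu$ of $\mu$. Testing \eqref{CHde2} with $\varphi=1$ gives $|\Omega|\bar\mu=\int_\Omega\big(f_v(\phi)+f_n(\phi^k)\big)\,dx$, and $f_v$ blows up at $\pm1$. You need a quantitative singular-potential estimate, namely the Miranville--Zelik inequality $f_v(s)(s-m)\ge c_m|f_v(s)|-C_m$ for fixed $m\in(-1,1)$. This inequality underlies the bound \eqref{Sin_l2} quoted by the paper and hence \eqref{muH1}.

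Be careful with the mass preservation you invoke, which Lemma \ref{LeChe} also asserts. For a generic $\mu$ the mass of $\phi(\mu)$ is dictated by $\mu$, and mass is conserved only at a solution, via \eqref{CHde1} with $\varphi=1$. The robust argument runs as follows. Take $m:=|\Omega|^{-1}\int_\Omega\phi^k\,dx\in(-1,1)$ and write $(\phi-\phi^k,\mu)=(\phi-m,\mu)-(\phi^k-m,P_0\mu)$. Testing \eqref{CHde2} with $\phi-m$ and with $1$ then yields $(\phi-\phi^k,\mu)\ge c|\bar\mu|-C(1+\|\nabla\mu\|)$.

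This bound grows only linearly in $\bar\mu$: for a constant $\mu\equiv\lambda$ one has $\langle\mathcal T(\lambda),\lambda\rangle=\lambda\int_\Omega(\phi(\lambda)-\phi^k)\,dx\le 2|\Omega|\,|\lambda|$. So superlinear coercivity actually fails. Combined with the flow bound above, however, it does give $\langle\mathcal T(\mu),\mu\rangle>0$ on large spheres of $Y$. You should therefore conclude with the version of Browder--Minty that needs only this sign condition, proved by Galerkin approximation, Brouwer's acute-angle lemma, and Minty's trick.
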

	\begin{proof}
		For given $\mu \in Y$, owing to Lemma \ref{LeChe} and \ref{sol2}, the left-hand side of Eq. \eqref{CHde1} defines a continous and bounded operator $T$: $Y\rightarrow Y^\prime$
		\begin{align*}
			\langle T(\mu), \nu\rangle:=(\phi-\phi^k, \nu)-\delta t(\phi^k \Au, \nabla \nu)+\delta t(\nabla \mu_p, \nabla \nu), \quad \forall \nu \in Y,
		\end{align*}
		where we recall $\mu_p=\mu+p$. 
		It follows $\forall \mu, \nu \in Y$
		\begin{align}
			&\langle T(\mu)-T(\nu), \mu-\nu\rangle= (\phi_\mu-\phi_\nu, \mu-\nu)-\delta t\big(\phi^k (\Au_\mu-\Au_\nu), \nabla (\mu-\nu)\big)\nonumber \\
			&+\delta t \big(\nabla (\mu_p-\nu_p), \nabla (\mu-\nu)\big). \label{mono}
		\end{align}
		
		We show that the right-hand side of Eq. \eqref{mono} is non-negative.
		Thanks to the convexity of $F_v$, Eq. \eqref{CHde2} implies 
		\begin{align}
			\label{mono1}
			(\phi_\mu-\phi_\nu, \mu-\nu)=||\nabla(\phi_\mu-\phi_\nu)||^2+\big(f_v(\phi_\mu)-f_v(\phi_\nu), \phi_\mu-\phi_\nu\big) \geq 0,
		\end{align}
		with equality iff $\mu=\nu$. Likewise, one derives from Eqs. \eqref{Darde} and \eqref{Conde}
		\begin{align}
			\label{mono2}
			&	-\delta t\big(\phi^k (\Au_\mu-\Au_\nu), \nabla (\mu-\nu)\big)=\delta t\int_\Omega \eta(\phi^k)|\Au_\mu-\Au_\nu|^2\, dx+\delta t\big(\nabla (p_\mu-p_\nu),(\Au_\mu-\Au_\nu)\big)  \nonumber \\
			&=\delta t\int_\Omega \eta(\phi^k)|\Au_\mu-\Au_\nu|^2\, dx+\alpha \delta t\big(\nabla(\mu_p-\nu_p), 
			\nabla (p_\mu-p_\nu)\big).
		\end{align}
		Finally, one has
		\begin{align}
			\label{mono3}
			&\delta t \big(\nabla (\mu_p-\nu_p), \nabla (\mu-\nu)\big)=\delta t || \nabla(\mu_p-\nu_p) ||^2- \delta t \alpha \big(\nabla(\mu_p-\nu_p), 
			\nabla (p_\mu-p_\nu)\big).
		\end{align}
		Collecting Eqs. \eqref{mono1}--\eqref{mono3}, one concludes
		\begin{align*}
			\langle T(\mu)-T(\nu), \mu-\nu\rangle \geq 0, \forall \mu, \nu \in Y	
		\end{align*}
		with equality iff $\mu=\nu$.  Hence the operator $T$ is strictly monotone.
		
		Next, we show that $T$ is also coercive.  One calculates
		\begin{align}
			\label{coer}
			\langle T(\mu), \mu\rangle:=(\phi-\phi^k, \mu)-\delta t(\phi^k \Au, \nabla \mu)+\delta t(\nabla \mu_p, \nabla \mu).
		\end{align}
		In a similar fashion, one deduces
		\begin{align*}
			&(\phi-\phi^k, \mu) \geq \frac{1}{2}||\nabla \phi||^2+\int_\Omega F(\phi)\, dx-\Big( \frac{1}{2}||\nabla \phi^k||^2+\int_\Omega F(\phi^k)\, dx \Big), \\
			&-\delta t(\phi^k \Au, \nabla \mu)+\delta t(\nabla \mu_p, \nabla \mu) \geq \delta t||\nabla \mu_p ||^2+\frac{\delta t}{2} \int_{\Omega} \eta(\phi^k) |\Au|^2 \, dx-C,
		\end{align*}
		where one has utilized the uniform boundedness $|\phi^k|<1$ a. e..  By using the equivalent formulation of the Darcy equations \eqref{Dard}, one estimates the pressure gradient
		\begin{align*}
			||\nabla p||\leq C|| \sqrt{\eta} \Au||+C||\nabla \mu_p||+C,
		\end{align*}
		Hence 
		\begin{align*}
			||\nabla \mu|| \leq ||\nabla \mu_p||+|\alpha| ||\nabla p|| \leq C|| \sqrt{\eta} \Au||+C||\nabla \mu_p||+C.
		\end{align*}
		It follows
		\begin{align*}
			\langle T(\mu), \mu\rangle &\geq \delta t||\nabla \mu_p ||^2+\frac{\delta t}{2} \int_{\Omega} \eta(\phi^k) |\Au|^2 \, dx+\frac{1}{2}||\nabla \phi||^2+\int_\Omega F(\phi)\, dx-C \\
			&\geq C||\nabla \mu||^2+\frac{1}{2}||\nabla \phi||^2+\int_\Omega F(\phi)\, dx-C.
		\end{align*}
		Recall from \cite[Theorem 4.3]{AbWi2007}
		\begin{align}
			\label{Sin_l2}
			||f_v(\phi)|| \leq C (||P_0 \mu||+||\phi||+1) \leq C (||\nabla \mu||+||\nabla \phi||+1),
		\end{align}
		where $P_0$ is the $L^2$ projection onto $L^2_0(\Omega)$, and Poincare inequality is utilized in the last inequality.  Eq. \eqref{CHde2} then implies 
		\begin{align*}
			|\int_\Omega \mu \, dx| \leq C (||\nabla \mu||+||\nabla \phi||+1), 
		\end{align*}
		hence 
		\begin{align}
			\label{muH1}
			||\mu||_Y \leq C (||\nabla \mu||+||\nabla \phi||+1).
		\end{align}
		Therefore
		\begin{align*}
			\langle T(\mu), \mu\rangle &\geq C ||\mu||_Y^2-C,
		\end{align*}
		whence
		\begin{align*}
			\frac{	\langle T(\mu), \mu\rangle }{||\mu||_Y} \rightarrow \infty, \text{ as }  ||\mu||_Y \rightarrow \infty.
		\end{align*}
		This establishes the coercivity of $T$. 
		
		It follows from Browder-Minty lemma \cite{Zeidler1986} that there exists a solution $\mu \in Y$ such that 
		\begin{align*}
			0=	\langle T(\mu), \varphi \rangle=(\phi-\phi^k, \varphi)-\delta t(\phi^k \Au, \nabla \varphi)+\delta t(\nabla \mu_p, \nabla \varphi), \quad \forall \varphi \in Y.
		\end{align*}
		
		To derive the energy inequality \eqref{en-dis}, one recalls the simple identity
		\begin{align*}
			F(\phi)-F(\phi^k) \leq (f_v(\phi)-f_n(\phi^k))(\phi-\phi^k).
		\end{align*}
		One then takes $\varphi=\mu_p$ in Eq. \eqref{CHd1}, $\varphi=\phi-\phi^k$ in Eq. \eqref{CHd2}, tests Eqs. \eqref{Dard} by $\delta t \Au$, takes $\varphi=p$ and $\varphi=z$ respectively in Eq. \eqref{Cond}. This completes the proof.
	\end{proof}
	
	Now we are in a position to introduce the approximate solutions.  For $t \in [t^k, t^{k+1})$
	\begin{align*}
		\begin{aligned}
			&\phi^{\delta}:=\frac{t_{k+1}-t}{\delta t}\phi^{k}+\frac{t-t_{k}}{\delta t}\phi^{k+1},\quad  \hat{\phi}^{\delta}:=\phi^{k+1}, \quad 	\tilde{\phi}^{\delta}:=\phi^{k} \\
			&\Au^{\delta}= \Au^{k+1}, \quad  p^{\delta}:=p^{k+1}, \quad   \mu_p^{\delta}:=\mu_p^{k+1}, \\
			&\rho(\phi)=\frac{1-\phi}{2}+ \frac{1+\phi}{2}\frac{\rho_2}{\rho_1},  \quad \rho^\delta=\rho(\phi^{\delta}).
		\end{aligned}
	\end{align*}
	Then the approximate solutions satisfy the following equations $\forall \varphi \in Y$
	\begin{align}
		&(\partial_t \phi^\delta, \varphi)-(\tilde{\phi}^{\delta} \Au^{\delta}, \nabla \varphi)+(\nabla \mu_p^{\delta}, \nabla \varphi)=0, \label{CHc1}\\
		&(\partial_t \rho^\delta, \varphi)-\big(\rho(\tilde{\phi}^{\delta})  \Au^\delta, \nabla \varphi\big)=0, \label{Conc}\\
		&f_v(\hat{\phi}^\delta)+f_n(\tilde{\phi}^{\delta})-\Delta \hat{\phi}^\delta+\alpha p^\delta =\mu_p^\delta, \quad a.e., \label{CHc2} \\
		&\eta (\tilde{\phi}^{\delta}) \bld u^\delta =-\frac{2\rho_1}{\rho_1+\rho_2}\rho(\tilde{\phi}^{\delta}) \nabla p^\delta-\tilde{\phi}^{\delta} \nabla \mu_p^\delta-\rho(\tilde{\phi}^{\delta}) \bld k,  \quad a.e. \label{Darc}, \\
		&\nabla \hat{\phi}^\delta \cdot n=0, \text{ on } \Gamma, \label{CHc2-bc}
	\end{align}
	where Eqs. \eqref{CHc2} and \eqref{CHc2-bc} are due to the $H^2$ regularity of $\phi^\delta$ from Lemma \ref{ex-tds}.
	
	\subsection{Passage to the limit}

	We summarize the apriori estimates of the approximate solution in the following lemma.
	\begin{lemma}\label{apri-es}
		Assume $\phi_0 \in Y$, $E(\phi_0)<\infty$ and $|\phi_0|<1$ a.e.. Then the following estimates hold
		\begin{align}
			&|\hat{\phi}^\delta|<1, \quad a. e., \label{unif-es}\\
			& ||\hat{\phi}^\delta||_{L^\infty(0,T; Y)}+|| \Au^\delta||_{L^2(0T; \mathbf{X})}+||\mu_p^\delta ||_{L^2(0,T; Y)} \leq C_T, \label{ener-es} \\
			&||p^\delta||_{L^2(0,T; M)} \leq C_T,  \quad ||\partial_t \phi^\delta ||_{L^2(0,T;Y^\prime)} \leq C_T, \label{pre-es} \\
			&||\hat{\phi}^\delta||_{L^4(0,T; H^2(\Omega))} \leq C_T, \quad ||f_v(\hat{\phi}^\delta)||_{L^2(Q)} \leq C_T, \label{H2-es}
		\end{align}
		where the constants $C_T$ are independent of $\delta t$.
	\end{lemma}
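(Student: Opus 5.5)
The plan is to derive every estimate from the discrete energy law \eqref{en-dis} of Lemma \ref{ex-tds}, with Darcy's equation \eqref{Darc} supplying pressure control and the elliptic problem \eqref{CHc2} supplying $H^2$ control.

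\textbf{Pointwise bound and energy estimates.} The bound \eqref{unif-es} comes directly from Lemma \ref{ex-tds} (equivalently Lemma \ref{LeChe}) by induction on $k$, starting from $|\phi_0|<1$. Since $F\ge 0$ and $\rho z$ is bounded, \eqref{en-dis} bounds $\|\nabla\hat\phi^\delta\|$ in $L^\infty(0,T)$. The mass $\int_\Omega\phi^k$ is conserved (take $\varphi=1$ in \eqref{CHd1}), and $|\hat\phi^\delta|<1$, so $\hat\phi^\delta$ is bounded in $L^\infty(0,T;Y)$. The viscosity satisfies $\eta\ge\min(\eta_1,\eta_2)>0$, so \eqref{en-dis} also bounds $\|\Au^\delta\|_{L^2(\mathbf X)}$ and $\|\nabla\mu_p^\delta\|_{L^2(Q)}$.

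\textbf{Pressure.} Dividing \eqref{Darc} by $\rho(\tilde\phi^\delta)$, which lies between the positive constants $\min(1,\rho_2/\rho_1)$ and $\max(1,\rho_2/\rho_1)$, gives pointwise
\[
|\nabla p^\delta|\le C\bigl(|\Au^\delta|+|\nabla\mu_p^\delta|+1\bigr),
\]
using $|\tilde\phi^\delta|<1$. Hence $\nabla p^\delta$ is bounded in $L^2(Q)$. Since $p^\delta$ has mean zero, Poincar\'e's inequality gives the first bound in \eqref{pre-es}.

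\textbf{Mean of $\mu_p$.} To finish \eqref{ener-es} we still need the mean of $\mu_p^\delta$. Set $\mu^\delta=\mu_p^\delta-\alpha p^\delta$; then $\nabla\mu^\delta$ is bounded in $L^2(Q)$. I will apply \eqref{Sin_l2} to \eqref{CHc2} written for $\mu^\delta$, whose right-hand side is $f_v(\hat\phi^\delta)+f_n(\tilde\phi^\delta)-\Delta\hat\phi^\delta$. The term $f_n(\tilde\phi^\delta)$ is bounded because $f_n$ is Lipschitz (the concave part $-\phi^2/2$-type). This gives
\[
\|f_v(\hat\phi^\delta)\|\le C\bigl(\|\nabla\mu^\delta\|+1\bigr).
\]
Integrating \eqref{CHc2} over $\Omega$, where $\int_\Omega\Delta\hat\phi^\delta=0$ by the Neumann condition \eqref{CHc2-bc}, bounds $|\int_\Omega\mu^\delta|$ by the same quantity. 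So $\mu^\delta$ is bounded in $L^2(0,T;Y)$, hence so is $\mu_p^\delta$, and also $\|f_v(\hat\phi^\delta)\|_{L^2(Q)}\le C_T$.

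\textbf{Time derivative and $H^2$ bound.} For $\partial_t\phi^\delta$, take any $\varphi\in Y$ in \eqref{CHc1}. Using $|\tilde\phi^\delta|<1$,
\[
|\langle\partial_t\phi^\delta,\varphi\rangle|\le\bigl(\|\Au^\delta\|+\|\nabla\mu_p^\delta\|\bigr)\|\varphi\|_Y,
\]
which gives the $L^2(0,T;Y')$ bound. For the $H^2$ bound, $\hat\phi^\delta$ solves the Neumann problem $-\Delta\hat\phi^\delta=g$ with $g=\mu^\delta-f_v(\hat\phi^\delta)-f_n(\tilde\phi^\delta)$. Elliptic regularity gives $\|\hat\phi^\delta\|_{H^2}\le C(\|g\|+\|\hat\phi^\delta\|)$, and by the previous steps $\|g\|\le C(\|\mu^\delta\|_Y+1)$. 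This only yields $L^2(0,T;H^2)$.

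\textbf{Main obstacle: upgrading to $L^4(0,T;H^2)$.} To get $L^4$ in time I will use the monotonicity of $f_v$ instead of \eqref{Sin_l2}. Test \eqref{CHc2} with $-\Delta\hat\phi^\delta$; the monotonicity of $f_v$ gives $(f_v(\hat\phi^\delta),-\Delta\hat\phi^\delta)\ge0$. Then
\[
\|\Delta\hat\phi^\delta\|^2\le(\nabla\mu^\delta,\nabla\hat\phi^\delta)+C\le\|\nabla\mu^\delta\|\,\|\nabla\hat\phi^\delta\|+C,
\]
so $\|\Delta\hat\phi^\delta\|\le C(\|\nabla\mu^\delta\|^{1/2}+1)$. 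Since $\|\nabla\hat\phi^\delta\|$ is bounded uniformly in time and $\nabla\mu^\delta\in L^2(Q)$, this gives $\|\Delta\hat\phi^\delta\|\in L^4(0,T)$, and elliptic regularity then yields the first bound in \eqref{H2-es}. The delicate part is justifying the test with $f_v(\hat\phi^\delta)$ singular. I would do this either by the standard approximation of $f_v$ by Lipschitz monotone functions, or via the $H^2$ regularity from Lemma \ref{LeChe} together with $f_v'\ge0$ and integration by parts using \eqref{CHc2-bc}.
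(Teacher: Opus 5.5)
Your proposal is correct and follows essentially the same route as the paper. Both arguments use the discrete energy law \eqref{en-dis}, the pointwise pressure-gradient bound from \eqref{Darc}, the estimate \eqref{Sin_l2} for $f_v(\hat\phi^\delta)$ and the mean of the chemical potential, and testing \eqref{CHc2} with $\Delta\hat\phi^\delta$ (dropping the nonnegative $f_v'$ term) to reach $L^4(0,T;H^2(\Omega))$. Your ordering bounds $\nabla p^\delta$ before invoking \eqref{muH1}, since that estimate controls $\mu^\delta=\mu_p^\delta-\alpha p^\delta$ rather than $\mu_p^\delta$ itself; this is more careful than the paper, which cites \eqref{muH1} before establishing the pressure bound.
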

	We note similar estimates hold for $\tilde{\phi}^\delta$ as well.
	\begin{proof}
		The proof of these estimates are standard. The uniform estimate \eqref{unif-es} is from Lemma \ref{ex-tds}. The energy law \eqref{en-dis} implies
		\begin{align*}
			||\hat{\phi}^\delta||_{L^\infty(0,T; Y)}+|| \Au^\delta||_{L^2(0T; \mathbf{X})}+||\nabla \mu_p^\delta ||_{L^2(Q)} \leq C_T.
		\end{align*}
		In light of the estimate \eqref{muH1} one then derives $|| \mu_p^\delta ||_{L^2(0,T; Y)} \leq C_T$. This establishes \eqref{ener-es}.  By using Eqs. \eqref{Darc} and \eqref{CHc1} respectively
		\begin{align*}
			&||\nabla p^\delta|| \leq C(||\Au^\delta||+||\nabla \mu_p^\delta||+1), \\
			&|(\partial_t \phi^\delta, \varphi)| \leq (||\Au^\delta||+||\nabla \mu_p^\delta||) ||\nabla \varphi||, \forall \varphi \in Y,
		\end{align*}
		one readily derives \eqref{pre-es}. Finally by testing Eq. \eqref{CHc2} with $\Delta \hat{\phi}^\delta$ and perform integration by parts one has
		\begin{align*}
			||\Delta \hat{\phi}^\delta||^2+\int_{\Omega} f_v^\prime (\hat{\phi}^\delta) |\nabla \hat{\phi}^\delta|^2\, dx \leq C( ||\nabla p^\delta|| +||\nabla \mu_p^\delta||)||\nabla \hat{\phi}^\delta||.
		\end{align*}
		The estimates \eqref{H2-es} then follow from elliptic regularity and the inequality \eqref{Sin_l2}. This completes the proof.
	\end{proof}

	We are now ready to pass to the limit and prove the main Theorem \ref{exis-theo}.
	
	\begin{proof}
		Since
		\begin{align*}
			\hat{\phi}^\delta-\phi^\delta=\frac{t^{k+1}-t}{\delta t} (\phi^{k+1}-\phi^k), \quad t \in [t^k, t^{k+1}),
		\end{align*}
		it follows $\phi^\delta$ satisfies the same estimates as $\hat{\phi}^\delta$ in Lemma \ref{apri-es}.  Moreover in view of \eqref{en-dis}
		\begin{align*}
			\|\nabla(\hat{\phi}^{\delta}-\phi^{\delta})\|_{L^2(Q)}^{2}=\frac{\delta t}{3}\sum_{k=0}^{N-1}\|\nabla(\phi^{k+1}-\phi^{k})\|_{L^{2}(Q)}^2\leq C\delta t\rightarrow 0~ as~ \delta\rightarrow 0.
		\end{align*}
		Therefore the sequences $\{\tilde{\phi}^{\delta}\}$, $\{\phi^{\delta}\}$ and $\hat{\phi}^{\delta}$, if convergent, converge to the same limit.
		
		The estimates in Lemma \ref{apri-es} implies the following weak convergence results
		\begin{align}
			&\hat{\phi}^{\delta}\longrightarrow \phi~~~~~weakly~ *~in~ L^{\infty}(0,T;Y),\label{limit_7}\\
			&~~~~~~~~~~~~~~~~weakly~in~ L^4(0,T;H^2(\Omega)),\label{limit_8}\\	
			&\partial_t \phi^\delta \longrightarrow \partial_t \phi ~~~~~weakly~in~ L^{2}(0,T;Y^\prime),\label{limit_3} \\
			& {\Au}^{\delta}\longrightarrow \Au~~~~weakly~ in~ L^2(0,T;\bm{X}),\label{limit_2}\\
			&p^{\delta}\longrightarrow p~~~~~weakly~ in ~L^{2}(0,T;M),\label{limit_5}\\
			&\mu_p^{\delta}\longrightarrow \mu_p~~~weakly~in~L^2(0,T;Y).\label{limit_11}
		\end{align}
		Furthermore, the Aubin-Lions-Simon lemma (cf. \cite[Corollary 4]{Simon1987}) yields
		\begin{align}\label{stong_phi}
			& \phi^{\delta} \longrightarrow \phi  ~~~~ strongly~in ~L^4\big(0,T;H^1(\Omega)\big) \cap C\big(0,T;L^2(\Omega)\big),
		\end{align}
		and in particular 
		\begin{align}\label{point-es}
			\phi^\delta \rightarrow \phi, \quad a.e. \text{ in } Q.
		\end{align}
		
		To pass to the limit in the nonlinear potential $f_v(\hat{\phi}^\delta)$, one introduces the set (cf. \cite{Miranville2019})
		\begin{align*}
			S_M:=\{(x, t) \in Q:  |\hat{\phi}^\delta| >1-\frac{1}{M}\}, \quad M>0 \text{ large}.
		\end{align*}
		Thanks to \eqref{H2-es} and the monotonicity of $f_v$
		\begin{align*}
			C_T\geq \int_{Q} |f_v(\hat{\phi}^\delta)| \, dxdt&\geq \int_{S_M} |f_v(\hat{\phi}^\delta)| \, dxdt \geq
			|S_M| f_v(1-\frac{1}{M}),
		\end{align*}
		hence
		\begin{align*}
			|S_M| \leq \frac{C_T}{f_v(1-\frac{1}{M})}.
		\end{align*}
		Let $\delta t \rightarrow 0$ and $M\rightarrow \infty$ respectively, one concludes that
		\begin{align*}
			|\{(x, t) \in Q:  |\phi| \geq 1\}|=0,
		\end{align*}
		that is $|\phi|<1$ a.e. in $Q$. Therefore 
		\begin{align*}
			f_v(\hat{\phi}^\delta) \rightarrow f_v(\phi) \quad a.e. \text{ in } Q.
		\end{align*}
		In light of the estimate \eqref{H2-es} again, one obtains
		\begin{align}\label{pot-conv}
			f_v(\hat{\phi}^\delta) \rightarrow  f_v(\phi)  \text{ weakly in } L^2(Q).
		\end{align}
		
		These weak and strong convergence results allow one to pass to the limit in the approximate system \eqref{CHc1}--\eqref{CHc2}, and establish the existence of weak solution in the sense of Definition \ref{def-w}.
		
		To establish the energy law, we take $\varphi=\mu_p$ in Eq. \eqref{CH1}, test Eqs. \eqref{Dar} with $\Au$, take $\varphi=\frac{2\rho_1}{\rho_1+\rho_2}p$ and $\varphi=z$ in Eq. \eqref{Con} respectively, and take summation of the results
		\begin{align}\label{En-law1}
			\langle \partial_t \phi, \mu_p \rangle +\frac{2\rho_1}{\rho_1+\rho_2} \langle \partial_t \rho, p \rangle +\langle \partial_t \rho, z\rangle=-\int_{\Omega} \eta(\phi)|\Au|^2+|\nabla \mu_p|^2\, dx.
		\end{align}
		Since 
		\begin{align*}
			\partial_t \rho=\frac{1}{2}\frac{\rho_2-\rho_1}{\rho_1} \partial_t \phi,
		\end{align*}
		and in light of $p\in L^2(0,T; M)$, one expresses Eq.\eqref{En-law1} as
		\begin{align}
			\label{En-law2}
			\langle \partial_t \phi, \mu \rangle +\langle \partial_t \rho, z\rangle=-\int_{\Omega} \eta(\phi)|\Au|^2+|\nabla \mu_p|^2\, dx,
		\end{align}
		where $\mu=f(\phi)-\Delta \phi $ is in  $L^2(0,T; Y)$.  Recall the convex-concave  splitting $ F(\phi)=F_v(\phi)+F_n(\phi)$. Following the theory of monotone operator \cite[Proof of Theorem 6]{Abels2009}, one has
		\begin{align*}
			\frac{d}{dt} \int_{\Omega}F(\phi)+\frac{1}{2}|\nabla \phi|^2\, dx=\langle \partial_t \phi, \mu \rangle. 
		\end{align*}
		Hence
		\begin{align*}
			\frac{d}{dt} E=-\int_{\Omega} \eta(\phi)|\Au|^2+|\nabla \mu_p|^2\, dx.
		\end{align*}
		Integration on $(0,t)$ gives the energy identity
		\begin{align*}
			E(\phi(t))+\int_{Q_{0,t}} \eta(\phi)|\Au|^2+|\nabla \mu_p|^2\, dxdt=E(\phi_0).
		\end{align*}
		
		Note that the energy law \eqref{En-law} implies $E(\phi(t))$ is continuous on $[0, T]$. Since $F_v$ is convex and $\phi \in C(0, T; L^2(\Omega))$, one has
		\begin{align*}
			\limsup_{s\rightarrow t} \frac{1}{2}||\nabla \phi (s)||^2 &\leq E(\phi(t))-\int_\Omega F(\phi)\, dx-\int_\Omega \rho z\, dx=\frac{1}{2}||\nabla \phi (t)||^2\\
			&\leq \liminf_{s\rightarrow t} \frac{1}{2}||\nabla \phi (s)||^2.
		\end{align*}
		Hence $\phi \in C(0,T; Y)$. This completes the proof of Theorem \ref{exis-theo}.
	\end{proof}

\section{Appendix} \label{App}
Here, we  obtain the formal sharp-interface limit of the qCHD system as the interface thickness tends to zero.
	\subsection{Non-dimensionalization}
	Following the non-dimensionalization in \cite{KAD2007, HaWa2016}, one chooses the characteristic length scale $L^\ast$, velocity scale $U^\ast$, viscosity scale $\nu^\ast$, the chemical potential scale $\theta_c$, mobility scale $m^\ast$, the permeability scale $\Pi^\ast$,  pressure scale $p^\ast=\frac{L^\ast \nu^\ast U^\ast}{\Pi^\ast}$. Define the interface thickness $\xi=\sqrt{\frac{\epsilon^{\ast}}{\theta_c}}$, and surface tension $\gamma=\frac{2\sqrt{2}\epsilon^{\ast}}{3\xi}$. 

 The non-dimensional form of the qCHD equations is as follows:
    \begin{subequations}
		\begin{align}
			\mathbb{K}^{-1} \boldsymbol{u} &= -\nabla p - \frac{1}{\epsilon Ca^\ast}\phi \nabla \mu - \frac{Bo}{Ca^\ast}\rho \boldsymbol{k},\label{rescale1} \\
			\nabla \cdot \boldsymbol{u} &= \frac{\alpha}{Pe} \nabla \cdot \left(m(\phi)\nabla \mu_p\right),\label{rescale2} \\
			\partial_t (\chi \phi) + \nabla \cdot (\phi \boldsymbol{u}) &= \frac{1}{Pe}\nabla \cdot \left(m(\phi)\nabla \mu_p\right),\label{rescale3} \\
			\mu_p &= \mu + \alpha \epsilon Ca^{\ast} p, \quad \mu = \frac{\beta}{2}\ln{\left(\frac{1+\phi}{1-\phi}\right)} -\phi-\epsilon^2\Delta \phi,\label{rescale4}\\
			\nabla \phi \cdot \boldsymbol{n} &= \boldsymbol{u} \cdot \boldsymbol{n} = m(\phi) \nabla \mu_p \cdot \boldsymbol{n} = 0, \text{ on } \partial \Omega,
		\end{align}
	\end{subequations}
	where $Da=\frac{\Pi^{\ast}}{(L^\ast)^2}$ is the Darcy number, $Pe=\frac{L^\ast U^\ast}{\theta_c m^\ast}$ is the Peclet number, $Ca=\frac{2\sqrt2\nu^{\ast}U^{\ast}}{3\gamma}$ is the capillary number, $Ca^\ast=\frac{Ca}{Da}$ is the ratio of the capillary number to the Darcy number, $\epsilon=\frac{\xi} {L^\ast}$ is the Cahn number,  $\alpha=\frac{\rho_1-\rho_2}{\rho_1+\rho_2}$ is the Atwood number, $Bo=\frac{\rho_1 (L^\ast)^2 g}{\gamma}$ is the Bond number, $\rho(\phi) = \frac{1-\phi}{2} + \frac{1+\phi}{2}\frac{\rho_2}{\rho_1}$ is the density,  $\eta(\phi) = \frac{1-\phi}{2} + \frac{1+\phi}{2}\frac{\eta_2}{\eta_1}$ is the viscosity, $\mathbb{K}=\frac{\Pi}{\eta(\phi)}$ is the dimensionless hydraulic conductivity matrix, and $\beta = \frac{\theta_0}{\theta_c}$.

    To obtain a non-trivial order-one dynamical equation for small $\epsilon$, we take 	$Pe=O(\frac{1}{\epsilon})$ and re-scale $\frac{1}{\epsilon}\mu\rightarrow \mu$, $\frac{1}{\epsilon}\mu_p\rightarrow \mu_p$.
	We obtain from \eqref{rescale1}--\eqref{rescale4}
	\begin{subequations}
	\begin{align}
			&\mathbb{K}^{-1} \bld u =-\nabla p-\frac{1}{Ca^\ast}\phi \nabla \mu-\frac{Bo}{Ca^\ast}\rho \bld k,  \label{rescale2.1} \\
			&\nabla \cdot \bld u=\alpha \epsilon^2 \nabla \cdot \big(m(\phi)\nabla \mu_p\big)\label{rescale2.2},  \\
			&\partial_t (\chi \phi) +\nabla \cdot (\phi \bld u)=\epsilon^2\nabla \cdot \big(m(\phi)\nabla \mu_p\big),\label{rescale2.3}  \\
			&\mu_p=\mu+\alpha Ca^{\ast}  p, \quad \mu= \frac{1}{\epsilon}\left[\frac{\beta}{2}\ln{\left(\frac{1+\phi}{1-\phi}\right)}-\phi\right]-\epsilon\Delta \phi \label{rescale2.4}.
		\end{align}
	\end{subequations}	
	Note that
		\begin{equation}\mu \nabla \phi = \nabla \tilde{E}-\epsilon \nabla \cdot(\nabla \phi \otimes \nabla \phi), \end{equation}
        where from \eqref{Alt-En}, $\tilde{E}=\frac{1}{\epsilon} F(\phi)+\frac{\epsilon}{2}|\nabla \phi|^2$. One can re-write Eq. \eqref{rescale2.1} as
        \begin{equation}\label{mod-darcy}
\mathbb{K}^{-1}\bld{u} = -\nabla \tilde{p} - \frac{\epsilon}{Ca^*}\nabla \cdot(\nabla \phi \otimes \nabla \phi)-\frac{Bo}{Ca^\ast}\rho \bld k,
\end{equation}
where $\tilde{p} = p +\frac{1}{Ca^*}(\phi \mu - \tilde{E})$ is the modified pressure.

\subsection{Outer expansion}	
Let the velocity $\boldsymbol{u}$, the order parameter $\phi$, and the chemical potential $\mu$ have regular expansions in the outer region:
\begin{equation}
	f = f_0 + \epsilon f_1 + \cdots, \quad \text{for } f \in \{\boldsymbol{u}, \phi, \mu\},
\end{equation}
while the modified pressure $\tilde{p}$ assumes a singular expansion:
\begin{equation}
	\tilde{p} = \frac{1}{\epsilon}\tilde{p}_{-1} +\tilde{p}_0 + \cdots.
\end{equation}
For the logic behind this choice, the reader is referred to \cite{LeeLowengrubGoodman2002,Dede2018,Fei2017,GarckeStinner2006}, where it is established that the different pressure scaling arises because the capillary stress, while regular in the outer region, becomes singular in the inner layer. This necessitates an $\mathcal{O}(\epsilon^{-1})$ pressure contribution to ensure its inner gradient balances the leading-order Korteweg stress, a standard scaling in sharp-interface asymptotics for Hele--Shaw--Cahn--Hilliard type systems. It follows from \eqref{mod-darcy} that $\tilde{p}_{-1} \equiv Const$.

From the leading-order equation $\eqref{rescale2.4}_{\text{outer}}^{(1)}$:
\begin{equation}\label{outer_1}
    \frac{\beta}{2}\ln{\left(\frac{1+\phi_0}{1-\phi_0}\right)} - \phi_0 = 0.
\end{equation}
The left-hand side is an odd function. We reject the trivial solution $\phi_0=0$ since we anticipate a two-phase sharp interface model in the limit. The other two roots $\phi_0^{\pm}$ represent each bulk phase. Here we consider the ``deep quench" regime $\beta<<1$. In this regime, the two roots can be written as $\phi_0^{\pm}=\pm(1-\delta)$ with $\delta<<1$ satisfying
\begin{equation}\label{outer_1_exp}
    \delta = 2e^{-2/\beta} \exp\left[ \delta\left(\frac{2}{\beta} - \frac{1}{2}\right) \right] \approx 2e^{-2/\beta}.
\end{equation}

In each bulk phase, one recovers the Darcy's equations:
\begin{align}
    \mathbb{K}^{-1}\boldsymbol{u_0} &= -\nabla \tilde{p}_0 -\frac{Bo}{Ca^\ast}\rho_0 \boldsymbol{k} \quad \text{in } \, \Omega^+\cup \Omega^-,\label{outer_2}\\
    \nabla \cdot \boldsymbol{u_0} &= 0 \quad \text{in} \, \Omega^+\cup \Omega^-\label{outer_3}
\end{align}
with $\rho_0 = \frac{1-\phi_0}{2} + \frac{1+\phi_0}{2}\frac{\rho_2}{\rho_1}.$

\subsection{Inner Expansion}

To analyze the transition layer, or inner region, near the evolving interface
$\Sigma(t)$, we introduce a stretched coordinate system. Let the interface
$\Sigma(t)$ be parameterized locally by $\boldsymbol{\alpha}(t,s)$, where
$s \in V \subset \mathbb{R}^{d-1}$ denotes the parameter domain of this
coordinate chart on $\Sigma(t)$. Any point $\boldsymbol{x}$ in a tubular
neighborhood of the interface can be expressed in terms of its signed distance
$\varepsilon z$ along the unit normal vector $\boldsymbol{\nu}(t,s)$, which
points into region $\Omega^-$:
\begin{equation}
  \boldsymbol{x}
  =
  \boldsymbol{\alpha}(t, s) + \varepsilon z\,\boldsymbol{\nu}(t, s),
  \qquad
  s \in V,\quad z \in (-\delta/\varepsilon,\;\delta/\varepsilon).
\end{equation}
In particular, $z = d(t,\boldsymbol{x})/\varepsilon$ is the stretched normal coordinate,
where $d(t,\boldsymbol{x})$ denotes the signed distance of $\boldsymbol{x}$ from $\Sigma(t)$. Any scalar
function $f(t,\boldsymbol{x})$ defined near the interface can then be rewritten in these
coordinates as
\[
  F(t,s,z)
  = f\bigl(t,\;\boldsymbol{\alpha}(t,s)
             + \varepsilon z\,\boldsymbol{\nu}(t,s)\bigr).
\]
This change of variables induces the corresponding transformations of the
standard differential operators, summarized in
Table~\ref{table-diff-ops}. For the derivation, the reader is referred to Appendix B-D of \cite{GarckeStinner2006}. 

\begin{table}[ht]
  \centering
  \renewcommand{\arraystretch}{1.5}
  \begin{tabular}{c | l | c | l}
    \hline
    \textbf{Operation} & \textbf{Expression} & \textbf{Operation} & \textbf{Expression} \\
\hline
    
    $\dfrac{\partial f}{\partial x}$ &
      $=-\dfrac{1}{\varepsilon} \dfrac{\partial \alpha_{2}}{\partial s}
        \dfrac{\partial F}{\partial z}+\cdots$ &
    $\dfrac{\partial f}{\partial y}$ &
      $=\dfrac{1}{\varepsilon} \dfrac{\partial \alpha_{1}}{\partial s}
        \dfrac{\partial F}{\partial z}+\cdots$ \\
    $\dfrac{\partial f}{\partial t}$ &
      $=-\dfrac{1}{\varepsilon} \boldsymbol{V^\nu} \dfrac{\partial F}{\partial z}+\cdots$ &
    $\nabla_{x} f$ &
      $=\dfrac{1}{\varepsilon} \boldsymbol{\nu} \dfrac{\partial F}{\partial z}
        +\nabla_{\Sigma} F+\cdots$ \\
    $\nabla_{x} \cdot \boldsymbol{u}$ &
      $=\dfrac{1}{\varepsilon} \dfrac{\partial \boldsymbol{V}}{\partial z}
        \cdot \boldsymbol{\nu}+\nabla_{\Sigma} \cdot \boldsymbol{V}+\cdots$ &
    $\nabla^{2} f$ &
      $=\dfrac{1}{\varepsilon^{2}} \dfrac{\partial^{2} F}{\partial z^{2}}
        -\dfrac{1}{\varepsilon} \kappa \dfrac{\partial F}{\partial z}+\cdots$ \\  
\hline
  \end{tabular}
  \caption{Differential operations and their expressions in inner coordinates.}
  \label{table-diff-ops}
\end{table}
Here, $\boldsymbol{V^\nu}$, $\nabla_{\Sigma} F$, and $\kappa$ denote the normal velocity of the interface ($\Sigma$), the surface gradient of $F$, and the mean curvature (defined as $=-\nabla_{\Sigma}\cdot \boldsymbol{\nu}$) of the interface, respectively.

The variables $(\boldsymbol{u}, \tilde{p}, \phi, \mu, \mu_p)_\epsilon$ are denoted by
$(\boldsymbol{V}, P, \Phi, M, M_p)_\epsilon$ in the inner region, and the inner
expansions are
  \[
    F_{\varepsilon}(t, s, z)
    = F_0(t, s, z) + \varepsilon F_1(t, s, z) + \cdots
    \quad\text{for } F_{\varepsilon} \in
    \{\boldsymbol{V}_{\varepsilon}, \Phi_{\varepsilon}, M_{\varepsilon}\},
  \]
  \[
    P_{\varepsilon}(t, s, z)
    = \frac{1}{\varepsilon} P_{-1}(t, s, z) + P_0(t, s, z) + \cdots  \quad\text{also for } M_p.
  \]

The inner and outer expansions are connected via matching conditions,
requiring the inner solution in the far field ($z \to \pm\infty$) to match the
outer solution at the interface ($\boldsymbol{x}\in\Sigma$):
\begin{subequations}
  \begin{align}
    \lim_{z \to \pm \infty} F_0(t, s, z)
    &= f_{0}^{\pm }(t, x),\label{matching1}\\
    \lim_{z \to \pm \infty} \frac{\partial}{\partial z}F_0(t, s, z)
    &= 0,\label{matching2}\\
    \lim_{z \to \pm \infty} \frac{\partial}{\partial z}F_1(t, s, z)
    &= \frac{\partial}{\partial \nu}f_{0}^{\pm}(t, x),\label{matching3}\\
    \lim_{z \to \pm \infty} P_{-1}(t, s, z)
    &= 0,\label{matching4}\\
    \lim_{z \to \pm \infty} P_0(t, s, z)
    &= \tilde{p}_{0}^{\pm}(t, x),\label{matching5}
  \end{align}
\end{subequations}
where
\[
  f_0^{\pm}=
  \lim_{\delta \to 0}f_0(t,x\pm\delta \boldsymbol{\nu})
  \quad \text{for } x \in \Sigma,\]

and we have assumed the constant outer pressure $\tilde{p}_{-1}$ to be zero in the matching condition \eqref{matching4}, without loss of generality.

From $\eqref{rescale2.2}_{\text{inner}}^{-1}$ one obtains
\begin{align*}
  \frac{1}{\epsilon}\frac{\partial \bld{V_0}}{\partial z}\cdot \boldsymbol{\nu} &= \mathcal{O}(1),\\
  \intertext{then to the leading-order approximation and integrating in $z$ gives}
  \bld{V_0}\cdot \boldsymbol{\nu} &= h_1(s,t).
\end{align*}
Using the matching condition \eqref{matching1} for the velocity then yields
\begin{equation}
  \left[\boldsymbol{u}_0\right]_1^2\cdot\boldsymbol{\nu}=0
  \quad \text{on } \Sigma.
\end{equation}
Here, $$\left[\boldsymbol{u}_0\right]_1^2 := \lim_{\delta \to 0}\boldsymbol{u}_0(t,x+\delta \boldsymbol{\nu}) - \lim_{\delta \to 0}\boldsymbol{u}_0(t,x-\delta \boldsymbol{\nu})$$ is the jump of $\boldsymbol{u}_0$ across the interface.

From \eqref{rescale2.2} and \eqref{rescale2.3} we obtain
\begin{equation}\label{comb}
  \chi \frac{\partial \phi}{\partial t}
  +\nabla \phi \cdot \bld{u}
  =\epsilon^2(1-\alpha\phi)\nabla\cdot\left[m(\phi)\nabla\mu_p\right].
\end{equation}
Taking the leading-order terms, $\eqref{comb}_{\text{inner}}^{-1}$ gives
\begin{equation}
  \boldsymbol{V^\nu}_0
  = \frac{1}{\chi}\bld{V_0}\cdot \boldsymbol{\nu}
  \quad \text{on } \Sigma,
\end{equation}
so that $\boldsymbol{V^\nu}_0$ is the normal interface velocity at leading order.

From $\eqref{rescale2.4}_{\text{inner}}^{-1}$ we obtain
\begin{align*}
  F'(\Phi_{0})
  &= \frac{\partial^2 \Phi_0}{\partial z^2},\\
  \intertext{where $F'(\phi)
   = \frac{\beta}{2}\ln{\frac{1+\phi}{1-\phi}} - \phi$.
   Multiplying by $\partial_z \Phi_0$ and integrating yields}
  2F(\Phi_0)
  &= \left(\frac{\partial \Phi_0}{\partial z}\right)^2 + h_2(s,t).
\end{align*}
Taking $z\rightarrow \pm \infty$ and applying the matching conditions shows
that $h_2(s,t) \equiv Const$, which implies that the leading-order profile $\Phi_{0}$
is a function of $z$ only. With
\[
F(\Phi_0)
=
\frac{\beta}{2}\left[(1+\Phi_0)\ln(1+\Phi_0)
                   +(1-\Phi_0)\ln(1-\Phi_0)\right]-\frac{\Phi_0^2}{2} +C,
\]
we find the implicit expression
\begin{equation}
  z = \pm\int_{\Phi^{\ast}}^{\Phi_{0}}\frac{d\Phi_0}{\sqrt{2F(\Phi_0)}} = \pm Z(\Phi_0) - C_1,
\end{equation}
which determines a diffuse interface profile $\Phi_0(z)$ by the implicit function theorem.

Next we analyze the modified Darcy equation \eqref{mod-darcy}. The term
$\epsilon\nabla\cdot(\nabla \phi \otimes \nabla\phi)$ transforms in inner
coordinates as (P. 542, \cite{Dede2018})
\[\left[\frac{1}{\varepsilon^{2}} \frac{\partial}{\partial z}
   \left(\frac{\partial \Phi }{\partial z}\right)^{2} \boldsymbol{\nu}
  +\frac{1}{\varepsilon} \frac{\partial}{\partial z}
   \left(\frac{\partial \Phi}{\partial z} \nabla_{\Sigma} \Phi\right)
  +\frac{1}{\varepsilon} \nabla_{\Sigma} \cdot
   \left(\left(\frac{\partial \Phi}{\partial z}\right)^{2}
          \boldsymbol{\nu} \otimes \boldsymbol{\nu}\right)\right] + \cdots.\]
Then $\eqref{mod-darcy}_{\text{inner}}^{-2}$ yields
\begin{align*}
  \frac{\partial P_{-1}}{\partial z}
  = -\frac{1}{Ca^{*}}\frac{\partial }{\partial z}
    \left(\frac{\partial \Phi_0}{\partial z}\right)^2.
\end{align*}
Integrating and using the matching conditions
$\lim_{z \to \pm \infty}P_{-1}=0$ and
$\lim_{z \to \pm \infty} \frac{\partial \Phi_0}{\partial z} = 0$, we obtain
\begin{equation}
  P_{-1} = -\frac{1}{Ca^{*}}\left(\frac{\partial\Phi_0}{\partial z}\right)^2.
\end{equation}

At the next order, $\eqref{mod-darcy}_{\text{inner}}^{-1}$ gives
\begin{align*}
  \frac{\partial P_0}{\partial z}\boldsymbol{\nu}
  + \nabla_{\Sigma}P_{-1}
  +\frac{1}{Ca^{\ast}}\frac{\partial}{\partial z}
    \left(\frac{\partial \Phi_0}{\partial z}\nabla_{\Sigma}\Phi_0\right)
  +\frac{1}{Ca^{\ast}}\nabla_{\Sigma} \cdot
    \left[\left(\frac{\partial \Phi_0}{\partial z}\right)^2
           \boldsymbol{\nu}\otimes\boldsymbol{\nu}\right]=0.
\end{align*}
Since $\Phi_0$ depends only on $z$, we have $\nabla_{\Sigma}\Phi_0=0$, and
taking the dot product with $\boldsymbol{\nu}$ and using
$\nabla_\Sigma \cdot \boldsymbol{\nu} = -\kappa$ (the mean curvature), this reduces
to
\begin{align*}
  \frac{\partial P_0}{\partial z}
  = \frac{\kappa}{Ca^*}\left(\frac{\partial \Phi_0}{\partial z}\right)^2.
\end{align*}
Integrating in $z$ from $-\infty$ to $+\infty$ gives
\begin{equation}
  [P_0]_{-\infty}^{+\infty}
  = \frac{\kappa}{Ca^{\ast}} \int_{-\infty}^{\infty}
    \left(\frac{d\Phi_0}{dz}\right)^2 dz.
\end{equation}
We introduce
\[
  \lambda =
  \int_{-\infty}^{\infty}
  \left(\frac{d\Phi_0}{dz}\right)^2 dz,
\]
so that
\[
  [P_0]_{-\infty}^{+\infty}
  = \frac{\lambda}{Ca^{\ast}}\,\kappa.
\]
Using the matching condition \eqref{matching5}, we finally obtain the
Young--Laplace law
\begin{equation}  
  \left[\tilde{p}_0\right]_{1}^{2}
  = \sigma \kappa
  \quad \text{on } \Sigma,
  \qquad
  \sigma = \frac{\lambda}{Ca^{\ast}}.
\end{equation}

\subsection{Sharp interface limit}
In summary of the outer expansion and the boundary layer expansion,  we obtain the sharp interface model as follows
\begin{subequations}
    \begin{align}
        \mathbb{K}^{-1}\boldsymbol{u_0} &= -\nabla p_0  -\frac{Bo}{Ca^\ast}\rho_0 \boldsymbol{k} \quad &\text{in } \, \Omega^+\cup \Omega^-,\\
        \nabla \cdot \bld{u_0}&=0 &\text{ in } \Omega^+\cup \Omega^-,\\
        \left[\boldsymbol{u}_0\right]_1^2\cdot\boldsymbol{\nu}&=0 & \text{ on } \Sigma,\\
        \boldsymbol{V^\nu}_0 &= \frac{1}{\chi} \bld{u_0}\cdot \boldsymbol{\nu}. &\text{ on } \Sigma,\\
        \left[p_0\right]_{1}^{2} &= \sigma \kappa  &\text{ on } \Sigma,
    \end{align}
\end{subequations}
    Where $\tilde{p}_0$ is written as $p_0$ for simplicity.

\section{Conclusion}	

The qCHD model developed in this paper describes two-phase flow in porous media for unmatched densities. This model maintains the thermodynamic structure of the diffuse-interface formulations. Also, the global existence of weak solution is established too. The proof is based on a pressure estimate from Darcy's equation, which allowed the control over the coupling terms arising from quasi-incompressibility and the singular logarithmic potential. Additionally, the sharp-interface asymptotics further demonstrate that this model is consistent with the Muskat-type free-boundary problem. All these findings provide a solid mathematical foundation for the qCHD model and encourage further study into strong existence, weak-strong uniqueness, long-time behavior, and stable, higher-order numerical approximations.

\section*{Acknowledgement}
D. Han and S. Sarkar acknowledge support from the National Science Foundation under grant DMS-2310340.

\section*{Financial disclosure}
None reported.

\section*{Conflict of interest}
The authors declare no potential conflict of interests.

	\bibliographystyle{unsrturl}
\bibliography{sayantan-ref}
	
\end{document}